\DeclareMathAlphabet{\mathpzc}{OT1}{pzc}{m}{it}
\newtheorem{te}{Theorem}[section]
\newtheorem{os}[te]{Remark}
\newtheorem{prop}[te]{Proposition}
\newtheorem{lem}[te]{Lemma}
\newtheorem{ex}[te]{Example}
\newtheorem{coro}[te]{Corollary}
\numberwithin{equation}{section}
\newcommand{\br}[1]{{\textcolor{black}{#1}}}
\newcommand{\brr}[1]{{\textcolor{black}{#1}}}
\newcommand{\brbr}[1]{{\textcolor{black}{#1}}}
\def \l { \left( }
\def \r {\right) }
\def \ll { \left\lbrace }
\def \rr { \right\rbrace }
\begin{document}

	\title[]{On semi-Markov processes and their Kolmogorov's integro-differential equations}
	\author[E. Orsingher]{Enzo Orsingher$^{1}$}
	\author[C. Ricciuti]{Costantino Ricciuti$^{1,\text{\textsection}}$}
	\address[1]{Dipartimento di Scienze Statistiche, Sapienza - Universit\`{a} di Roma}
	\author[B. Toaldo]{Bruno Toaldo$^{2}$}
	\address[2]{Dipartimento di Matematica e Applicazioni ``Renato Caccioppoli'' - Universit\`{a} degli Studi di Napoli Federico II}
	\email[\textsection  Corresponding Author]{costantino.ricciuti@uniroma1.it}
	\keywords{Semi-Markov processes, time-changed processes, subordinators, non-homogeneous subordinators, integro-differential equations, Bernstein functions, fractional equations}
	\date{\today}
	\subjclass[2010]{60K15, 60J25, 60G51}

		\begin{abstract}
Semi-Markov processes are a generalization of Markov processes since the exponential distribution of time intervals is replaced with an arbitrary distribution. This paper provides an integro-differential form of the Kolmogorov's backward equations for a large class of homogeneous semi-Markov processes, having the form of an abstract Volterra integro-differential equation. An equivalent evolutionary (differential) form of the equations is also provided. Fractional equations in the time variable are a particular case of our analysis. Weak limits of semi-Markov processes are also considered and their corresponding integro-differential Kolmogorov's equations are identified.
\end{abstract}

	\maketitle
\tableofcontents


\section{Introduction}
Semi-Markov processes have been introduced by L\'evy \cite{levysemi} and Smith \cite{smith} in order to reduce the limitation induced by the exponential distribution of the corresponding time intervals. In these papers jump semi-Markov processes were considered, i.e., jump processes with a waiting time between jumps which is not necessarily given by an exponential random variable. This is the immediate generalization of Markov chains since the Markov property is the typical consequence of the lack of memory of the exponential distribution. The general theory of semi-Markov processes has then been developed by Pyke \cite{pykefinite, pykeinfinite}. The first generalization of the Kolmogorov's equations to the semi-Markov case was given in Feller \cite{fellersemi}. In this paper the author provided an integral form for the backward equation when a semi-Markov process runs on a countable state-space, but from the discussion in \cite{fellersemi} it is clear that the generalization of such equations to any state space was not so far. In succesive years, indeed, the theory of semi-Markov processes has been developed (e.g. Cinlar \cite{cinlarsemi, cinlar}, Gihman and Skorohod \cite{gihman}, Jacod \cite{jacod}). Recent developments of the theory can be found in Harlamov \cite{harlamov} and in Korolyuk and Swishchuk \cite{koro} in which semi-Markov processes are discussed in full generality.

A large class of semi-Markov processes can be equivalently constructed as time-changed Markov processes. This fact is well formalized in Kurtz \cite{kurtz} in the case where the waiting times between jumps have finite mean. A more general approach is proposed in Kaspi and Maisonneuve \cite{kaspi} by assuming a Markov additive process $(A_t, D_t)$ and defining $X(t) = A(L(t))$ where $L(t)$ is the hitting time process of $D_t$. 
In the present paper we consider indeed semi-Markov processes which can be obtained as a time-changed Markov process. In recent years Baeumer and Meerschaert \cite{fracCauchy}, Meerschaert and Scheffler \cite{meertri}, Meerschaert et al. \cite{meerpoisson} considered Markov processes time-changed via an independent inverse of an $\alpha$-stable subordinator and shown that these processes are governed by time-fractional equations, which are very popular in applications (e.g. \cite{meerbook, metzler} for a review of possible applications or \cite{hairer} for a recent development; see \cite{grot1, grot2} for a different probabilistc approach related to fractional diffusion equation). When the same thing is done with a more general independent inverse subordinator then the equations become more general integro-differential equations (Kochubei \cite{kochu}, Kolokoltsov \cite{kololast, KoloCTRW, kolokoltsov}, Toaldo \cite{toaldopota, toaldodo}) or pseudo-differential in the time-variable \cite{meertri}. These processes can be often viewed as semi-Markov processes as discussed in Meerschaert and Straka \cite{meerstra}. Hence this suggests that there is a strong relationship between integro-differential equations and the Kolmogorov's equation of semi-Markov processes. In the present paper such a relationship is clearly established in a very general framework. The Kolmogorov's equations are firstly investigated when the semi-Markov processes have stepped paths. Such equations turn out to be Volterra integro-differential equations. Our framework also includes the case in which the Markov process is non independent on the random time process. This yields to variable order Volterra integro-differential equations having the form
\begin{align}
\frac{d}{dt} \int_0^t q(s, \cdot) \, k(t-s,\cdot) \, ds \, - k(t,\cdot) q(0,\cdot)\, = \, \l G q(t)\r (\cdot)
\label{volterraintro}
\end{align}
for $q:[0, \infty) \mapsto \mathfrak{B}$ where $\mathfrak{B}$ is a Banach space, $k(t, \cdot)$ is a suitable convolution kernel and $G$ is the generator of the Markov process. When the kernel $k$ does not depend on the vector variable of the Banach space the regularity of the equation has been recently investigated by \cite{caffarelli}. Further, weak limits of stepped semi-Markov processes are also studied and the corresponding Kolmogorov's equations are determined.
Time-fractional equations can be always viewed as particular and interesting cases of the equations studied in this paper.

\section{Preliminaries}
\br{We collect in this section some technical information which will be used throughout the paper.}
\subsection{Complete monotonicity and Bernstein functions} We recall here some basic facts on Bernstein functions and complete monotonicity. We refer to \cite{librobern} for such information. A function $f:(0, \infty) \mapsto \mathbb{R}$ is said to be a Bernstein function if it is of class $C^\infty$, $f(\lambda) \geq 0$ and $(-1)^{n-1}f^{(n)}(\lambda) \geq 0$ for all $\lambda >0$ (\cite[Definition 3.1]{librobern}). For a non-negative $C^\infty$ function $f$ to be a Bernstein function it is necessary and sufficient that $f^\prime$ is completely monotone. A function $g:(0, \infty) \mapsto \mathbb{R}$ is completely monotone if it is of class $C^\infty$ and such that $(-1)^{n}g^{(n)}(\lambda) \geq 0$ for all $n \in \mathbb{N} \cup \ll 0 \rr$ and $\lambda >0$; hence (e.g. \cite[Theorem 1.4]{librobern}) it can be written as the Laplace transform of a unique measure $K$ on $[0, \infty)$, i.e., for all $\lambda >0$,
\begin{align}
g(\lambda) \, = \, \int_{[0, \infty)}e^{-\lambda s}K(ds).
\end{align}
A consequence of the above facts is that a function $f$ is a Bernstein function if, and only if, it can be written in the form \cite[Theorem 3.2]{librobern}
\begin{align}
f(\lambda) \, = \, a+b\lambda + \int_0^\infty \l 1-e^{-\lambda s} \r \nu(ds)
\end{align}
where $a$ and $b$ are non-negative constants and $\nu(\cdot)$ is a measure  on $(0, \infty)$ satisfying the integrability condition $\int_0^\infty \l s \wedge 1 \r \nu(ds) < \infty$. A subclass of Bernstein functions, playing a central role in this paper is the class of complete Bernstein functions. A Bernstein function is said to be complete if the corresponding L\'evy measure has a completely monotone density with respect to the Lebesgue measure \cite[Definition 6.1]{librobern}. Furthermore, a Bernstein function $f \neq 0$, is complete if, and only if, can be represented as $f = 1/h$ where $h$ is a (non-negative) Stieltjes function, i.e., $h$ admits the representation
\begin{align}
h(\lambda) \, = \, \frac{a}{\lambda}+b+\int_0^\infty \frac{1}{s+\lambda} m(ds)
\end{align}
where $m$ is a measure on $(0, \infty)$ such that
\begin{align}
\int_0^\infty \frac{1}{1+s}m(ds) < \infty.
\end{align}

\subsection{Subordinators and non-homogeneous subordinators}
\label{nonhomosub}
A subordinator is a non-decreasing L\'evy process. Every Bernstein function is the Laplace exponent of a subordinator \cite[Theorem 1.2]{bertoins}. Hence with the symbol $\sigma^f(t)$ we denote the subordinator with transition probabilites $\mu_t(ds)$ such that
\begin{align}
\int_0^\infty e^{-\lambda s} \mu_t(ds) \, = \, \mathds{E}e^{-\lambda \sigma^f(t)} \, = \, e^{-tf(\lambda)}.
\end{align}
A non-homogeneous subordinator, say $\sigma^\Pi (t)$, $t \geq 0$, is a non-decreasing additive process in the sense of Sato \cite[Definition 1.6]{satolevy}. Hence it is a non-decreasing process with independent increments, stochastically continuous and with a.s. c\`{a}dl\`{a}g paths and such that $\sigma^\Pi (0) = 0$ a.s. Such a process can be constructed as proposed in \cite[Sec. 2]{orsrictoapota}, i.e., 
\begin{align}
\sigma^\Pi (t) \, : = \, b(t) + \sum_{0 \leq s \leq t} \mathpzc{e}(s)
\label{sumpoisson}
\end{align}
where $\mathpzc{e}(s)$ is a Poisson point process in $\mathbb{R}^+$ whose characteristic measure $\nu(dx, dt)$ on $(0, \infty) \times [0, \infty)$ satisfies the integrability condition
\begin{align}
\int_{(0, \infty)\times[0,t]} (x\wedge 1) \nu(dx, dw) \, < \, \infty.
\end{align}
Hence if $\nu(dx, dt) = v(dx) dt$, where $dt$ is the Lebesgue measure, the construction of a subordinator is obtained. In what follows we will always assume, as in \cite{orsrictoapota}, that $\nu(ds, \cdot)$ is absolutely continuous with respect to the Lebesgue measure and we denote a density as $\nu(ds, t) dt : = \nu(ds, dt)$. Furthermore we will assume that the process has no drift, hence in \eqref{sumpoisson} we have $b(t) =0$, for all $t \geq 0$. Therefore the expected number of jumps from $x$ to $x+dx$ occuring up to $t$ is given by
\begin{align}
\phi(dx, t) \, : = \, \int_0^t \nu(dx, s) ds.
\end{align}
By \cite[Eqs (2.6) and (2.7)]{orsrictoapota} we have
\begin{align}
\mathds{E} e^{-\lambda \sigma^\Pi (t)} \, = \, e^{-\Pi (\lambda, t)}
\end{align}
where
\begin{align}
\lambda \mapsto \Pi (\lambda, t) \, : = \, \int_0^\infty \l 1-e^{-\lambda s} \r \phi(ds, t)
\end{align}
or equivalently, by \cite[Eq. (2.12)]{orsrictoapota},
\begin{align}
\Pi (\lambda, t) \, = \, \int_0^t f(\lambda, s) ds
\end{align}
where $\lambda \mapsto f(\lambda, s)$ is, for each $s$, the Bernstein function
\begin{align}
f(\lambda, s) \, = \, \int_0^\infty \l 1-e^{-\lambda w} \r \nu(dw, s).
\label{29}
\end{align}
We will further assume throughout the paper that $\nu$ is such that $\sup_s f(\lambda, s) < \infty$ and that $s \mapsto f(\lambda, s)$ is continuous.

\section{Stepped semi-Markov processes and time-changed Markov processes}
\subsection{Stepped semi-Markov processes}
\label{defsemi}
Let $\l \mathcal{S}, \mathfrak{S} \r$ be a state space where $\mathfrak{S}$ is a $\sigma$-algebra on $\mathcal{S}$, generated from the space $\mathcal{S}$ endowed with the discrete topology induced by the metric
\begin{align}
\rho (x,y) \, = \, \begin{cases} 1, \qquad & x \neq y, \\ 0, & x=y. \end{cases}
\end{align}
We consider on $\mathcal{S}$ right-continuous processes $X(t)$, $t \geq 0$, whose paths are stepped functions. Hence the paths are functions $t \mapsto x(t)$, such that for any $t \geq 0$ there exists a $\delta >0$ such that for all $h \in (0, \delta)$ it is true that $x(t)=x(t+h)$, i.e. the functions $t \mapsto x(t)$ are right-continuous in the discrete topology, and further they have a finite number of discontinuities on any finite interval (of time). Processes with these paths are semi-Markov processes in the sense of Gihman and Skorohod (see \cite[Chapter 3]{gihman} and also \cite[Chapter 3, Section 12, p. 76]{harlamov}) if the couple $\l X(t), \gamma^X(t) \r$, where
\begin{align}
\gamma^X(t) \, := \,  t - \l 0 \vee \sup \ll s \leq t : X(s) \neq X(t) \rr \r,
\end{align}
is a strict Markov process. A process with these properties can be constructed as follows (see \cite[Chapter 3, Section 3]{gihman}). Assume a family of probability spaces $\l \Omega, \mathcal{F}, P^x \r$, $x \in \mathcal{S}$. Let $X_n$ be a discrete-time Markov chain on $\mathcal{S}$ under $P^x$ such that $P^x \l X_0 = x \r =1$ and define for all $B \in \mathfrak{S}$, the transition probabilities $B \mapsto h(x,B) = P^x \l X_1 \in B \r$. In the case $\mathcal{S}$ is countable we denote by $\l H \r_{ij} = h_{ij}$ the transition matrix of $X_n$. Let $\eta_i$ be a sequence of i.i.d. r.v.'s jointly independent from $X_n$ under any $P^x$ each one of which is uniformly distributed on $[0,1]$. Let $F_{x,y}$ be a c.d.f. of a non-negative r.v. for any $x,y \in \mathcal{S}$ and define a function $\varphi_{x,y}:[0,1] \mapsto [0, \infty)$ such that $\varphi_{x,y} (\eta_i)$ have distributions $F_{x,y}$ for any $i$.
Then let $J_i = \varphi_{X_i,X_{i+1}} (\eta_i)$ and define
\begin{align}
X(t) \, = \, X_n, \qquad \sum_{i=0}^{n-1} J_i \leq t < \sum_{i=0}^{n}J_i.
\label{defx}
\end{align}
This is equivalent to say that for any $x,y,z \in \mathcal{S}$
\begin{align}
P^x \l J_n \leq t \mid X_n = z, X_{n+1}=y \r \, = \, P^z \l J_0 \leq t \mid X_1 = y \r \, = \,  F_{z,y}(t).
\end{align}
We denote $\overline{F}_{x,y}(t) := 1-F_{x,y}(t)$. When $F_{x,y}$ has a density with respect to the Lebesgue measure, we denote it by $\mathpzc{f}_{x,y}(t)$. It is clear that if $\overline{F}_{x,y}(t) = e^{-\theta(x) t}$ then we have that \eqref{defx} defines a continuous-time Markov process. In this case we use the symbol $\mathcal{J}_i$ to denote such exponential r.v.'s, the symbol $M(t)$, $t \geq 0$, to denote the corresponding Markov process and $t \mapsto m(t)$ for the sample paths of $M(t)$. 
Now let $T_n = \sum_{i=1}^{n-1} J_i$ and define
\begin{align}
N^J(t) \, := \, \max \ll n \in \mathbb{N}: T_n \leq t \rr.
\label{ren}
\end{align}
By using \eqref{ren} we can say that \eqref{defx} is equivalent to define
\begin{align}
X(t) \, := \, X_{N^J(t)}.
\label{26}
\end{align}
It is clear that for $M(t)$ the process $N^J$ reduces to a birth process. In this case we denote the epochs of the birth process with the symbol $\mathcal{T}_n$. From the discussion above it is therefore clear that $P^x \l \mathcal{T}_{n+1}- \mathcal{T}_n > t \mid X_n = y \r = e^{-\theta(y)t}$.

Finally, note that in view of \cite[Chapter 3, Section 3, Lemma 2]{gihman} the process $X(t)$ satisfies the defining properties of semi-Markov processes, i.e., the couple $\l X(t), \gamma^X(t) \r$, where $\gamma^X (t) := t-T_{N^J(t)}$, is a strict (homogeneous) Markov process.
Hence it is clear that our semi-Markov processes are time-homogeneous in the sense that
\begin{align}
P^x \l X(t+\tau) \in B \mid X(\tau) = y, \gamma^X(\tau) =s \r \, = \, P^y \l X(t) \in B \mid \gamma^X (0) = s \r,
\label{homo}
\end{align}
for any $t,\tau \geq 0$, $0 \leq s \leq \tau$, $x,y \in \mathcal{S}$ and $B \in \mathfrak{S}$.

\subsection{Construction as time-changed Markov processes}
\label{sectimech}
The aim of this section is to make precise the following heuristical remarks.
The results below show when a homogeneous semi-Markov process is given by a random time change of a Markov process and which type of time-change is allowed. Suppose that at the random times $\l \mathcal{T}_i \r_{i \in \mathbb{N}}$ a Markov process $M(t^\prime)$ jumps in the states $\l X_i \r_{i \in \mathbb{N}}$ and a semi-Markov process $X(t)$ jumps in the same states $\l X_i \r_{i \in \mathbb{N}}$ at the random times $(T_i)_{i \in \mathbb{N}}$. If $t = g(t^\prime)$ we have that $T_i = g (\mathcal{T}_i)$, in distribution, and $X(t) = M(t^\prime) = M(g^{-1}(t))$. This can happen for a suitably defined strictly increasing function $t \mapsto g(t)$ which must be meant as  either a deterministic or as a random function. To determine which function $g$ is allowed, consider that the r.v.'s $g \l \mathcal{T}_{i+1}\r- g \l \mathcal{T}_i \r$ must be independent r.v.'s. \brr{Note also that the behaviour of the function $g$ can be dependent on the position of the process, i.e., the increment $g(t)-g(s)$ depends on the r.v.'s $M(w),s \leq w \leq t $. \brbr{This means that the (infinitesimal) increment $g(s+ds)-g(s)$ is given by the increment of a different function $\sigma^x(s+ds)-\sigma^x(s)$, conditionally on $M(s) = x$. Furthermore since $M(t)$ is a stepped process, we can write
\begin{align}
g(t) = g(\mathcal{T}_i)+\sigma^{X_i}(t)-\sigma^{X_i}(\mathcal{T}_i), \qquad \mathcal{T}_i \leq t < \mathcal{T}_{i+1},
\end{align}
since the position $X_i$ reached at $\mathcal{T}_i$ is mantained up to time $\mathcal{T}_{i+1}$}.}
\brbr{The holding times $T_{i+1}-T_i$ are the r.v.'s
\begin{align}
g(\mathcal{T}_{i+1}-) - g(\mathcal{T}_i) = \sigma^{X_i}(\mathcal{T}_{i+1}-)-\sigma^{X_i}(\mathcal{T}_i),
\label{39}
\end{align}
and to preserve time-homogeneity we may require that the distribution of $T_{i+1}-T_i$ does not depend on $i$ which is the number of the jumps. Hence if the increment satisfies $\sigma^{X_i}(t)-\sigma^{X_i}(s)=\sigma^{X_i}(t-s)$, in distribution, and if $\sigma^{X_i} (t)=\sigma^{X_i}(t-)$, then the distribution of $T_{i+1}-T_i$, which is the distribution of \eqref{39}, is independent on $i$ since  $\mathcal{T}_{i+1}-\mathcal{T}_i$ is an exponential r.v. with parameter $\theta (x)$ (conditionally on $X_i=x)$.} This suggest to choose $\sigma^{X_i}(t)$ as a strictly increasing process with stationary and independent increments, i.e., a strictly increasing subordinator. Since any increment $\sigma^{X_i}(t)$ is strictly increasing and left-invertible then $g$ is strictly increasing and left-invertible and therefore
\begin{align}
X(t) = X_n, \qquad g(\mathcal{T}_n) \leq t < g(\mathcal{T}_{n+1}),
\end{align}
is equivalent to
\begin{align}
X(t) = X_n, \qquad \mathcal{T}_n \leq g^{-1}(t) < \mathcal{T}_{n+1}.
\end{align}
In particular since $g$ is strictly increasing then the left-inverse $g^{-1}$ is the hitting-time process of $g$.
This heuristically shows that in this case $X(t)$ and $M(g^{-1}(t))$ are the same process and since in the previous remarks the increments of $g$ depend on the position $X_i$ (and on any $\tau_i$) then $g(t)$ is dependent on $M(t)$. This will be made precise below.

We show here that processes $X(t)$ defined as in Section \ref{defsemi} can be constructed under suitable assumptions as a particular time-change of $M$.
Consider a family of Bernstein functions $\ll f(\lambda, x,y) \rr_{(x,y) \in \mathcal{S}\times \mathcal{S}}$ having representation
\begin{align}
f(\lambda, x,y ) \, = \, \int_0^\infty \l 1-e^{-\lambda s} \r \nu(ds, x,y),
\end{align}
and let $\nu((0, \infty),x,y)=\infty$ for each $(x,y) \in \mathcal{S}\times \mathcal{S}$.
Let $\ll \sigma^{(x,y)}(t) \rr_{(x,y) \in \mathcal{S}\times \mathcal{S}}$, $t \geq 0$, be a family of subordinators with Laplace exponent $f(\lambda, x,y)$. Then consider $\ll L^{(x,y)} (t) \rr_{(x,y) \in \mathcal{S}\times \mathcal{S}}$ which is a family of inverses of $\sigma^{(x,y)}$.
Now consider a non-homogeneous subordinator defined as in \eqref{sumpoisson} with $b(t) =0$ for all $t \geq 0$ and with characteristic measure
\begin{align}
v(ds,t) \, = \, \sum_{k=1}^\infty \nu(ds, x_k, x_{k+1}) \, \mathds{1}_{\left[ \tau_k,\tau_{k+1} \right]}(t)
\label{lev}
\end{align}
where \br{the symbols $x_k$ and $\tau_k$ indicate the realization of $X_k$ and $\mathcal{T}_k$} so that
\begin{align}
t \mapsto m(t) = x_k, \qquad \tau_k \leq t < \tau_{k+1}.
\label{mpiccolodef}
\end{align}
Equivalently if we denote by $t \mapsto n(t)$ the sample paths of the counting process $N(t)$ associated with $M(t)$, we can rewrite \eqref{mpiccolodef} as
\begin{align}
m(t) = x_{n(t)}.
\end{align}
This is to say that each fixed path $t \mapsto m(t)$ of $M(t)$, whose discontinuities (jumps) are in the points $t = \tau_k$, defines a different measure $v(ds, t)dt$.
Hence the increments of $\sigma^\Pi(t)$ are given by
\begin{align}
\sigma^\Pi(t) \, = \, \sigma^\Pi (\mathcal{T}_n)+ \sigma^{(X_n, X_{n+1})}(t-\mathcal{T}_n), \qquad \mathcal{T}_n \leq t < \mathcal{T}_{n+1}.
\label{incre}
\end{align}
The process $\sigma^\Pi(t)$ is therefore a different non-homogeneous subordinator for any fixed path of $m(t)$ with Laplace exponent $\Pi (\lambda, t)$ which is determined by the path of $M(t)$. However the r.v. $\sigma^\Pi(t)$ depends only on the events needed at time $t$, i.e., we have
\begin{align}
 \mathds{E}^x\left[e^{-\lambda \sigma^\Pi (t)} \mid \tau_0, \cdots, \tau_{n(t)}, x_0, \cdots, x_{n(t)+1} \right] \notag \, = \, e^{-\Pi (\lambda, t)}
 \end{align}
 where
\begin{align}
\Pi (\lambda, t) \, = \,&  \int_0^t \int_0^\infty \l 1-e^{-\lambda s} \r v(ds,y)dy \ \notag \\
= \, &  \int_0^t \int_0^\infty \l 1-e^{-\lambda s} \r \sum_{k=0}^{n(t)} \nu(ds,x_k,x_{k+1}) \, \mathds{1}_{\left[ \tau_k,\tau_{k+1} \right]}(y)dy  \notag \\
= \, &   \sum_{k=0}^{n(t)-1} (\tau_{k+1}-\tau_{k} ) f(\lambda, x_k, x_{k+1}) + (t-\tau_{n(t)})f(\lambda, x_{n(t)}, x_{n(t)+1}),
\label{lapl}
\end{align}
where $\tau_0=0$ and, under $P^x$, $x_0=x$.
Then we define
\begin{align}
L^\Pi (t) \, := \, \inf \ll s \geq 0 : \sigma^\Pi (s) \geq t \rr.
\end{align}
\br{Since it is assumed that $\nu((0, \infty), x, y) = \infty$ for any $(x,y)$ then $v((0, \infty), t) = \infty$ for any $t$ and thus the process $\sigma^\Pi (t)$ is strictly increasing on any finite interval of time \cite[Proposition 2.2]{orsrictoapota}. Hence $L^\Pi \l \sigma^\Pi (t) \r = t$, a.s.}
The following Theorem is the precise statement of the heuristical discussion at the beginning of this section.
\begin{te}
\label{tetimech}
Let $X(t)$ be a stepped semi-Markov process (as in \eqref{defsemi}). Assume $\nu((0, \infty),x,y))=\infty$ for any $(x,y) \in \mathcal{S} \times \mathcal{S}$. The following assertions are equivalent.
\begin{enumerate}
\item For any $(x,y) \in \mathcal{S} \times \mathcal{S}$ and $z \in \mathcal{S}$, $\overline{F}_{x,y}(t) = \mathds{E}^ze^{-\theta(x)L^{(x,y)}(t)}$.
\item $X(t)$ and $M \l L^\Pi (t) \r$ are the same process.
\end{enumerate}
\end{te}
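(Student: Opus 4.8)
The plan is to prove the two implications separately, working path-by-path and reducing everything to a statement about the holding times of $X(t)$ versus the holding times of the time-changed process $M(L^\Pi(t))$. The key structural observation, already recorded in the excerpt, is that both processes use the same discrete skeleton $(X_n)$ and the same exponential clocks $(\mathcal{J}_i)$ (with rates $\theta(X_i)$) driving $M$; so the two processes agree in law if and only if, conditionally on the skeleton $X_n=x$, $X_{n+1}=y$, the holding time of $X$ between the $n$-th and $(n+1)$-th jump has the same distribution as the holding time of $M(L^\Pi(\cdot))$ between those same jumps. The first step is therefore to identify the latter holding time. Fix a path of $M$ with jump epochs $\tau_k$; by the construction \eqref{incre}, on the interval $\mathcal{T}_n \leq t < \mathcal{T}_{n+1}$ the non-homogeneous subordinator $\sigma^\Pi$ evolves exactly as the subordinator $\sigma^{(X_n,X_{n+1})}$ started from the value $\sigma^\Pi(\mathcal{T}_n)$. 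Because $\sigma^\Pi$ is strictly increasing with a.s.\ left inverse (the standing assumption $\nu((0,\infty),x,y)=\infty$ together with \cite[Proposition 2.2]{orsrictoapota}), the process $M(L^\Pi(t))$ jumps at the times $T_n := \sigma^\Pi(\mathcal{T}_n)$; hence its $n$-th holding time is $T_{n+1}-T_n = \sigma^\Pi(\mathcal{T}_{n+1}-)-\sigma^\Pi(\mathcal{T}_n) = \sigma^{(X_n,X_{n+1})}\!\bigl((\mathcal{T}_{n+1}-\mathcal{T}_n)-\bigr)$, and since $\sigma^{(X_n,X_{n+1})}$ has no fixed discontinuities this equals $\sigma^{(X_n,X_{n+1})}(\mathcal{T}_{n+1}-\mathcal{T}_n)$ in distribution.

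The second step is the explicit distributional computation. Conditionally on $X_n=x$, $X_{n+1}=y$, the increment $\mathcal{T}_{n+1}-\mathcal{T}_n$ is an exponential random variable $\mathcal{J}$ of rate $\theta(x)$, independent of the subordinator $\sigma^{(x,y)}$. I would compute
\begin{align}
P^z\bigl(\sigma^{(x,y)}(\mathcal{J}) > t\bigr) \, = \, P^z\bigl(\mathcal{J} > L^{(x,y)}(t)\bigr) \, = \, \mathds{E}^z e^{-\theta(x) L^{(x,y)}(t)},
\label{propfirst}
\end{align}
where the first equality uses that $\{\sigma^{(x,y)}(s) > t\} = \{s > L^{(x,y)}(t)\}$ (valid because $\sigma^{(x,y)}$ is strictly increasing, so $L^{(x,y)}$ is its genuine inverse on the relevant event), and the second is the independence of $\mathcal{J}$ from $\sigma^{(x,y)}$ followed by conditioning on $L^{(x,y)}(t)$. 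Thus the $n$-th holding time of $M(L^\Pi(t))$, given the skeleton transition $x\to y$, has survival function $\mathds{E}^z e^{-\theta(x)L^{(x,y)}(t)}$, with no dependence on $n$ — which is precisely the right-hand side appearing in (1). Comparing with the defining property $P^z(J_0 > t \mid X_0=x, X_1=y) = \overline{F}_{x,y}(t)$ of the semi-Markov process $X(t)$ gives the equivalence: $X(t)$ and $M(L^\Pi(t))$ share skeleton and holding-time laws, hence are the same process (here one invokes that a stepped semi-Markov process is determined in law by its skeleton chain and its conditional holding-time distributions $F_{x,y}$, as in the construction of Section \ref{defsemi}), if and only if $\overline{F}_{x,y}(t) = \mathds{E}^z e^{-\theta(x)L^{(x,y)}(t)}$ for all $x,y,z$.

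For the direction (1)$\Rightarrow$(2) one runs the above forward: assuming the identity on $\overline{F}_{x,y}$, the process $M(L^\Pi(t))$ has the same skeleton (trivially, it is $(X_n)$) and the same conditional holding-time laws as $X(t)$, and since these data determine the finite-dimensional distributions of a stepped semi-Markov process, the two processes coincide. For (2)$\Rightarrow$(1), if $X(t)=M(L^\Pi(t))$ then in particular the law of the first holding time given $X_0=x$, $X_1=y$ must agree under every $P^z$, and by the computation \eqref{propfirst} this forces $\overline{F}_{x,y}(t)=\mathds{E}^z e^{-\theta(x)L^{(x,y)}(t)}$. I expect the main obstacle to be rigor in the path-wise bookkeeping of Step 1 — specifically, justifying that $L^\Pi$ restricted to the random interval $[\mathcal{T}_n,\mathcal{T}_{n+1})$ really ``sees'' only $\sigma^{(X_n,X_{n+1})}$ and that the exponential clock $\mathcal{T}_{n+1}-\mathcal{T}_n$ is independent of that subordinator (this is where the Poisson-point-process construction \eqref{lev}–\eqref{incre} and the independence of $\sigma^{(x,y)}$ from the chain and the $\eta_i$ must be used carefully), together with the measure-theoretic point that matching skeleton and conditional holding-time laws suffices to identify the two processes in distribution.
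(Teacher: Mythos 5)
Your proposal is correct and follows essentially the same route as the paper: both arguments reduce the theorem to matching the conditional holding-time laws given the common embedded chain, identify the $n$-th holding time of $M\left(L^\Pi(t)\right)$ as $\sigma^{(X_n,X_{n+1})}(\mathcal{J}_n)$ via \eqref{incre} together with the a.s.\ absence of a jump of the subordinator at the independent exponential epoch, and then show that this random variable has survival function $\mathds{E}^z e^{-\theta(x)L^{(x,y)}(t)}$. The only (minor) difference lies in that last step: the paper matches Laplace transforms, computing $\mathds{E}\left[e^{-\lambda\sigma^{(x,y)}(\mathcal{J})}\right]=\theta(x)/\left(\theta(x)+f(\lambda,x,y)\right)$ and invoking \cite[Corollary 3.5]{meertri} for the transform of $t \mapsto \mathds{E}^z e^{-\theta(x)L^{(x,y)}(t)}$, whereas you obtain the same identity directly in the time domain through the inverse relation $\left\lbrace \sigma^{(x,y)}(s)>t \right\rbrace=\left\lbrace s>L^{(x,y)}(t) \right\rbrace$ (up to a null event) and independence of the exponential clock, which is an equally valid and slightly more elementary finish.
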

\begin{proof}
The process $\sigma^\Pi(t)$ is strictly increasing on any finite interval (\cite[Proposition 2.2]{orsrictoapota}) then $L^\Pi(t)$ has continuous sample paths. Since $X(t)$ and $M \l L^\Pi(t) \r$ have the same embedded chain, to prove that $X(t)$ coincides with $M(L^\Pi(t))$ it is sufficient to prove that $M(L^\Pi(t))$ has the same waiting times between the jumps of $X(t)$. Note that
\begin{align}
M \l L^\Pi(t)  \r \, = \, X_n, \qquad \mathcal{T}_n \leq L^\Pi(t) < \mathcal{T}_{n+1},
\end{align}
and hence by using \cite[Lemma 2.1]{meerpoisson} we have that the epochs of $M(L^\Pi(t))$ occurr at the random times $\sigma^\Pi (\mathcal{T}_n-)$. But it is true that, a.s., $\sigma^\Pi(\mathcal{T}_n-) = \sigma^\Pi (\mathcal{T}_n)$ (\cite[Theorem 2.1]{orsrictoapota}) and hence we have by \eqref{incre} that $\sigma^\Pi(\mathcal{T}_{n+1}) - \sigma^\Pi (\mathcal{T}_n) = \sigma^{(X_n,X_{n+1})} \l \mathcal{J}_n \r$ (we recall that $\mathcal{J}_n = \mathcal{T}_{n+1}-\mathcal{T}_n$). Since $\mathcal{J}_n$ are the holding times of the process $M(t)$, conditionally on $X_i=x$, are independent exponential r.v.'s with parameter $\theta(x)$. Hence we can compute the Laplace transform of the r.v. $\sigma^{(X_n,X_{n+1})} \l \mathcal{J}_n \r$ as
\begin{align}
\mathds{E}^x \l e^{-\lambda \sigma^{(X_n, X_{n+1})} \l \mathcal{J}_n \r} \mid X_n = y, X_{n+1}=z \r  \, = \, &  \mathds{E}^x e^{-\mathcal{J}_nf(\lambda, y,z)} \notag \\ = \, & \int_0^\infty \theta(y) e^{-\theta(y) w} e^{-wf(\lambda, y,z)} dw \notag \\ 
= \, & \frac{\theta(y)}{\theta(y)+f(\lambda, y,z)}.
\end{align}
Since by \cite[Corollary 3.5]{meertri} we have that
\begin{align}
\int_0^\infty e^{-\lambda t} \mathds{E}^x e^{-\theta(y) L^{(y,z)}(t) } dt \, = \,\lambda^{-1} \frac{f(\lambda, y, z)}{\theta(y)+f(\lambda, y,z)},
\label{319}
\end{align}
then the corresponding density has Laplace transform
\begin{align}
\int_0^\infty e^{-\lambda t} \mathpzc{f}_{y,z}(dt) \, = \, \frac{\theta(y)}{\theta(y)+f(\lambda, y,z)}.
\label{3200}
\end{align}
Therefore, for $\lambda >0$,
\begin{align}
\int_0^\infty e^{-\lambda t}\mathpzc{f}_{y,z} (dt) =\mathds{E}^x \l e^{-\lambda \sigma^{(X_n, X_{n+1})} \l \mathcal{J}_n \r} \mid X_n = y, X_{n+1}=z \r 
\end{align}
and therefore the proof is complete.
\end{proof}
\subsection{Complete monotonicity of the survival function} 
From the discussion above we know that a stepped semi-Markov process can be viewed as the random time-change (independent or dependent) of a Markov process. For example in \cite{jacod} or \cite{kurtz} it is also discussed the existence of a time-change relationship between Markov and semi-Markov processes. We provide here a condition which guarantees the existence of a time-change relationship between $M$ and $X$ in the form described in Theorem \ref{tetimech}. This condition also identify the process $\sigma^\Pi(t)$ when no information are given. Hence the following result is also a tool to identify the \br{random time process} related to a semi-Markov process.
\begin{te}
\label{tecm}
Let $K_{x,y}(\cdot)$, $(x,y) \in \mathcal{S} \times \mathcal{S}$, be a family of measures on $[0, \infty)$ which satisfies $K_{x,y}[0, \infty) =1$ and $\int_0^\infty s K_{x,y}(ds) = \infty$. The following assertions are equivalent.
\begin{enumerate}
\item For any $(x,y) \in \mathcal{S} \times \mathcal{S}$ it is true that the functions $t \mapsto \overline{F}_{x,y}(t)$ are completely monotone functions with respect to the measures $K_{x,y}(\cdot)$.
\item There exists a process $L^\Pi (t)$ such that $X(t)$ and $M \l L^\Pi (t) \r$ are the same process defined by setting in \eqref{lapl}
\begin{align}
\Pi(\lambda, t)  \, = \, \sum_{k=0}^{n(t)-1}(\tau_{k+1}-\tau_k) f(\lambda, x_k, x_{k+1})+(t-\tau_{n(t)}) f(\lambda, x_{n(t)}, x_{n(t)+1}),
\label{reprf}
\end{align}
with
\begin{align}
f(\lambda, x,y) \, = \,\br{\theta(x)} \frac{\lambda \widetilde{\overline{F}}_{x,y}(\lambda)}{1-\lambda \widetilde{\overline{F}}_{x,y}(\lambda)} \, = \, \br{\theta(x)}\int_0^\infty \l 1-e^{-\lambda s} \r \nu(ds,x,y)
\label{320}
\end{align}
which are unbounded complete Bernstein functions for any $(x,y) \in \mathcal{S} \times \mathcal{S}$.
\end{enumerate}
\end{te}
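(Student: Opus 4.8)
\emph{Strategy and reduction.} The plan is to isolate the analytic core --- an equivalence between complete monotonicity of $\overline{F}_{x,y}$ and $f(\lambda,x,y)$ being a complete Bernstein function of a special type --- and then invoke Theorem~\ref{tetimech} for the probabilistic part. Fix $(x,y)\in\mathcal S\times\mathcal S$ and $z\in\mathcal S$, write $\widetilde g(\lambda)=\int_0^\infty e^{-\lambda t}g(t)\,dt$, and put $u(\lambda):=\lambda\widetilde{\overline{F}}_{x,y}(\lambda)$. Since $F_{x,y}$ is a proper c.d.f., $0\le u(\lambda)\le 1$, with $u(\lambda)\to 0$ as $\lambda\to 0$ and $u(\lambda)\to\overline{F}_{x,y}(0^+)$ as $\lambda\to\infty$. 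A short computation shows that the formula \eqref{320}, $f(\lambda,x,y)=\theta(x)u(\lambda)/(1-u(\lambda))$, is equivalent to $u(\lambda)=f(\lambda,x,y)/(\theta(x)+f(\lambda,x,y))$, i.e. to $\widetilde{\overline{F}}_{x,y}(\lambda)=\lambda^{-1}f(\lambda,x,y)/(\theta(x)+f(\lambda,x,y))$; comparing with \eqref{319} and using the uniqueness and continuity of Laplace transforms, \emph{as soon as $f(\lambda,x,y)$ is a Bernstein function} this yields $\overline{F}_{x,y}(t)=\mathds E^z e^{-\theta(x)L^{(x,y)}(t)}$ for the inverse $L^{(x,y)}$ of the subordinator with Laplace exponent $f(\lambda,x,y)$. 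So it is enough to prove the analytic equivalence: \emph{(1) holds iff, for every $(x,y)$, the function $f(\lambda,x,y)=\theta(x)u(\lambda)/(1-u(\lambda))$ is an unbounded complete Bernstein function with no killing and no drift}, equivalently of the form $\theta(x)\int_0^\infty(1-e^{-\lambda s})\nu(ds,x,y)$ with $\nu((0,\infty),x,y)=\infty$. Granting this: for (1)$\Rightarrow$(2), the forward half of the equivalence gives $\nu((0,\infty),x,y)=\infty$, so Theorem~\ref{tetimech} applies and its condition~(1) holds by the observation just recorded, hence its condition~(2) holds, which with $\Pi(\lambda,t)$ as in \eqref{reprf} is exactly (2); for (2)$\Rightarrow$(1), assertion (2) supplies $f(\lambda,x,y)$ in the form \eqref{320} as an unbounded, killing- and drift-free complete Bernstein function, so $\widetilde{\overline{F}}_{x,y}(\lambda)=\lambda^{-1}f(\lambda,x,y)/(\theta(x)+f(\lambda,x,y))$, and the converse half of the equivalence returns (1).

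\emph{Proof of the analytic equivalence.} First, $\int_0^\infty sK_{x,y}(ds)=\infty$ forces $K_{x,y}((0,\infty))>0$, hence $\overline{F}_{x,y}(t)<1$ for $t>0$, $u(\lambda)<1$, and $f(\lambda,x,y)$ is well defined. The backbone is the identity $1-u(\lambda)=\theta(x)/(\theta(x)+f(\lambda,x,y))$, i.e. $f(\lambda,x,y)=\theta(x)(h(\lambda)^{-1}-1)$ with $h(\lambda):=1-u(\lambda)$. \emph{Forward implication.} Complete monotonicity w.r.t. $K_{x,y}$ means $\widetilde{\overline{F}}_{x,y}(\lambda)=\int_{[0,\infty)}(s+\lambda)^{-1}K_{x,y}(ds)$, whence $h(\lambda)=\int_{(0,\infty)}\frac{s\,K_{x,y}(ds)}{s+\lambda}$ is a nonzero Stieltjes function (its representing measure $s\,K_{x,y}(ds)$ integrates $(1+s)^{-1}$ because $K_{x,y}$ is finite) with $h(0^+)=1$. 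Since the reciprocal of a nonzero Stieltjes function is a complete Bernstein function, $h^{-1}$ is a CBF with killing rate $h(0^+)^{-1}=1$; subtracting the killing rate and multiplying by $\theta(x)$ leaves the (completely monotone) Lévy density and the drift unchanged, so $f(\lambda,x,y)$ is a killing-free CBF. Its drift equals $\theta(x)\lim_{\lambda\to\infty}(\lambda h(\lambda))^{-1}$, and $\lambda h(\lambda)=\int_{(0,\infty)}\frac{s}{1+s/\lambda}\,K_{x,y}(ds)\uparrow\int_0^\infty sK_{x,y}(ds)$ by monotone convergence, so the drift vanishes precisely when this integral is infinite, while $h(\lambda)\to 0$ always forces $f(\lambda,x,y)\to\infty$ (unboundedness). \emph{Converse implication.} If $f(\lambda,x,y)$ is an unbounded, killing- and drift-free CBF, then $S(\lambda):=\theta(x)/(\theta(x)+f(\lambda,x,y))$ is a Stieltjes function with $S(0^+)=1$ and $S(\infty)=0$, hence $S(\lambda)=\int_{(0,\infty)}(s+\lambda)^{-1}m(ds)$ with $\int_{(0,\infty)}s^{-1}m(ds)=1$; then $\widetilde{\overline{F}}_{x,y}(\lambda)=\lambda^{-1}(1-S(\lambda))=\int_{(0,\infty)}\frac{m(ds)}{s(s+\lambda)}$, so $\overline{F}_{x,y}$ is completely monotone with representing measure $K_{x,y}(ds):=s^{-1}m(ds)$, and $K_{x,y}([0,\infty))=S(0^+)=1$, $\int_0^\infty sK_{x,y}(ds)=m((0,\infty))=\lim_{\lambda\to\infty}\lambda S(\lambda)=\infty$ by drift-freeness.

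\emph{Main obstacle.} The genuinely delicate point, and the only place the two conditions on $K_{x,y}$ are used in full, is matching $\int_0^\infty sK_{x,y}(ds)=\infty$ with the vanishing of the drift of $f(\lambda,x,y)$: this is what legitimizes the pure-jump representation $f(\lambda,x,y)=\theta(x)\int_0^\infty(1-e^{-\lambda s})\nu(ds,x,y)$ and hence the applicability of Theorem~\ref{tetimech} and \eqref{319}. It requires careful bookkeeping of the behaviour of $u(\lambda)=\lambda\widetilde{\overline{F}}_{x,y}(\lambda)$ as $\lambda\to 0$ and $\lambda\to\infty$, together with the facts $\overline{F}_{x,y}(0^+)=K_{x,y}([0,\infty))=1$ and the properness of $F_{x,y}$. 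Everything else is the calculus of Bernstein, complete Bernstein and Stieltjes functions recalled in the Preliminaries, together with the already established Theorem~\ref{tetimech}.
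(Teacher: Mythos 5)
Your proof is correct and follows essentially the same route as the paper's: the key identity $f(\lambda,x,y)=\theta(x)\,\lambda\widetilde{\overline F}_{x,y}(\lambda)\big/\bigl(1-\lambda\widetilde{\overline F}_{x,y}(\lambda)\bigr)$, the Stieltjes/complete Bernstein calculus, the computation that the drift equals $\theta(x)\lim_{\lambda\to\infty}\bigl(\lambda(1-\lambda\widetilde{\overline F}_{x,y}(\lambda))\bigr)^{-1}=\theta(x)\big/\int_0^\infty sK_{x,y}(ds)=0$ together with unboundedness from $1-\lambda\widetilde{\overline F}_{x,y}(\lambda)\to 0$, the identification $\overline F_{x,y}(t)=\mathds E^z e^{-\theta(x)L^{(x,y)}(t)}$ via \eqref{319}, and the reduction to Theorem \ref{tetimech}. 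The only differences are organizational: you inline the content of Lemma \ref{te22} (and, in the converse, replace the appeal to Lemma \ref{tecommon} by a direct Stieltjes inversion of $\theta/(\theta+f)$), which is the same circle of ideas the paper packages into its auxiliary lemmas.
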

\begin{proof}
\begin{enumerate}
\item[ 1) $\to$ 2)] By using Theorem \ref{tetimech} it is sufficient to prove that any completely monotone function which is also the survival function of a non-negative r.v.'s (with a Lebesgue density diverging at zero) can be considered as the moment generating function of an inverse subordinator $L^f$ for some $f$. In other words we prove that if $t \mapsto \overline{F}_{x,y}(t)$ are completely monotone functions for any $(x,y)$, then it must be true that
\begin{align}
\overline{F}_{x,y}(t) \, = \, \mathds{E}^ze^{-\theta(x) L^{(x,y)}(t)}
\end{align}
for some inverse $L^{(x,y)}(t)$ and hence by Theorem \ref{tetimech} we have $X(t) = M (L^\Pi(t))$ where $\Pi$ is defined in \eqref{lapl}.
After this we prove that the representation in \eqref{reprf} is true.
Observe that the dependence of $f$ on $(x,y)$ is unnecessary in this proof: it is indeed sufficient to prove that every completely monotone function can be written as the moment generating function of an inverse subordinator $L^f$ for some Bernstein function $f$. Then letting $(x,y)$ vary is the same thing as considering a different function $f$. Hence in what follows we omit the dependence of $\overline F_{x,y}$ from $(x,y)$. \br{In the same spirit} we also fix the parameter $\theta(x) = \theta$.

Since $t \mapsto \overline{F}(t)$ is completely monotone then
\begin{align}
\overline{F}(t) = \int_0^\infty e^{-ts} K(ds)
\label{cmg}
\end{align}
for some measure $K(\cdot)$ with $K(0, \infty) =1$. Hence
\begin{align}
\widetilde{\overline{F}}(\lambda) = \int_0^\infty \frac{1}{s+\lambda} K(ds)
\label{gsti}
\end{align}
which is a Stieltjes function if
\begin{align}
\int_0^\infty \frac{1}{s+1} K(ds) < \infty.
\label{convst}
\end{align}
But \eqref{convst} is true since $K[0, \infty) = 1$.
Hence by Lemma \ref{te22} we know that there exists $f(\lambda)$ complete Bernstein function such that
\begin{align}
\widetilde{\overline{F}}(\lambda) = \frac{f(\lambda)}{\lambda} \frac{1}{\br{\theta}+f(\lambda)}.
\label{423}
\end{align}
Use \cite[Corollary 3.5]{meertri} to say that
\begin{align}
\int_0^\infty e^{-\lambda t} \mathds{E}^ze^{-\br{\theta}L^f(t)} dt \, = \, \frac{f(\lambda)}{\lambda} \frac{1}{\br{\theta}+f(\lambda)}
\end{align}
and hence since $t \mapsto \mathds{E}e^{-\br{\theta} L^f(t)}$ is completely monotone by Lemma \ref{tecommon} and obviously continuous we have that $\overline{F}(t) = \mathds{E}^xe^{-\br{\theta}L^f(t)}$ for any $t \geq 0$.

Now we prove that the representation for $f$ is true.
Rearrange \eqref{423} and use Lemma \ref{te22} to say that
\begin{align}
f(\lambda) \, = \,\br{\theta} \frac{\lambda \widetilde{\overline{F}}(\lambda)}{1-\lambda \widetilde{\overline{F}}(\lambda)}
\label{431}
\end{align}
is a complete Bernstein function.
Hence we have
\begin{align}
f(\lambda) = \br{\theta}\l a+b\lambda + \int_0^\infty \l 1-e^{-\lambda s} \r \nu(ds)\r,
\end{align}
where \br{the L\'evy measure $\nu(ds)$ has a completely monotone density $s \mapsto \nu(s)$ since $f$ is complete. Now we prove that $a=0$, $b=0$ and $\nu(0, \infty) = \infty$. By using \eqref{gsti} it is easy to verify that $\lim_{\lambda \to \infty}f(\lambda) = \infty$: indeed note that}
\begin{align}
\lim_{\lambda \to \infty} \lambda \widetilde{\overline{F}}(\lambda)\, = \, & \lim_{\lambda \to \infty} \int_0^\infty \frac{\lambda}{\lambda +s} K(ds) \notag \\
= \, & \lim_{\lambda \to \infty} \int_0^\infty \l 1-\frac{s}{\lambda + s}\r K(ds)
\label{335}
\end{align}
and since $\lambda \mapsto \l 1-s (\lambda +s)^{-1} \r$ is \br{bounded} and monotone we can move the limit inside the integral to say that $\lim_{\lambda \to \infty} \lambda \widetilde{\overline{F}}(\lambda) = 1$. We can conclude now that $\lim_{\lambda \to \infty } f(\lambda) = \infty$. Therefore it must be true, by \cite[Corollary 3.7, Item (v)]{librobern}, that $\nu(0, \infty) = \infty$ or $b>0$ (or both). 
Now we compute $b$. Note that by \cite[p. 16, item (iv)]{librobern} we know that
\begin{align}
b \, = \, \lim_{\lambda \to \infty} \lambda^{-1} f(\lambda)
\end{align}
and by using \eqref{431} we have that
\begin{align}
b \, = \, & \lim_{\lambda \to \infty} \frac{\widetilde{\overline{F}}(\lambda)}{1-\lambda \widetilde{\overline{F}}(\lambda)} \notag \\
= \, & \lim_{\lambda \to \infty} \frac{\int_0^\infty (s+\lambda)^{-1} K(ds)}{1-\lambda \int_0^\infty \l s+\lambda\r^{-1}  K(ds) }.
\label{limitforb}
\end{align}
However since $K(0, \infty) =1$  we note that the denominator is given by
\begin{align}
1-\lambda  \widetilde{\overline{F}}(\lambda) \, = \, & 1- \lambda \int_0^\infty \l s+\lambda\r^{-1}  K(ds) \notag  \\
= \, & \int_0^\infty \l 1- \frac{\lambda}{s+\lambda} \r K(ds) \notag \\
= \, & \int_0^\infty \frac{1}{s+\lambda} sK(ds).
\label{dens}
\end{align}
Let $k(ds): = sK(ds)$, the limit \eqref{limitforb} is
\begin{align}
b \, = \, &\lim_{\lambda \to \infty} \frac{\int_0^\infty (\lambda + s)^{-1} s^{-1}k(ds) }{\int_0^\infty (\lambda + s)^{-1} k(ds)} \notag \\
= \, & \lim_{\lambda \to \infty} \frac{\int_0^\infty \lambda^{-1} \l 1 +\lambda^{-1} s \r^{-1}s^{-1}k(ds)}{ \int_0^\infty \lambda^{-1} \l 1+\lambda^{-1} s \r^{-1} k(ds)} \notag \\
= \, & \lim_{\lambda \to \infty} \frac{\int_0^\infty  \l 1 +\lambda^{-1} s \r^{-1}s^{-1}k(ds)}{ \int_0^\infty  \l 1+\lambda^{-1} s \r^{-1} k(ds)}.
\label{227}
\end{align}
The function $\lambda \mapsto \l 1 +\lambda^{-1} s \r^{-1}$ is, for any fixed $s>0$, \br{bounded} and monotone. Hence in \eqref{227} we can move the limit inside the integral to get
\begin{align}
b \, = \,  \frac{\int_0^\infty s^{-1}k(ds)}{ \int_0^\infty  k(ds)} \, = \, \frac{1}{k(0, \infty)}.
\label{b1suk}
\end{align}
Hence we have by the assumptions that $b=0$ since $k(0, \infty) = \int_0^\infty sK(ds) = \infty$.
The fact that $a=0$ is true since $a = f(0)$ and we can use again the representation \eqref{335} to verify. This complete the proof.

\item[2) $\to$ 1)] Now we prove the converse statement.  If $X(t)$ and $M \l L^\Pi(t) \r$ are the same process then it must be true by Theorem \ref{tetimech} that $\overline{F}_{x,y}(t) = \mathds{E}^ze^{-\theta(x) L^{(x,y)}(t)}$ for any $(x,y)$ and that $L^{(x,y)}$ are inverses subordinators with Laplace exponents $f(\lambda, x,y)$ given in \eqref{320}. Further since $f(\lambda, x, y)$ are complete Bernstein functions then we have by Lemma \ref{tecommon} that $t \mapsto \overline{F}_{x,y}(t)$ is completely monotone with respect to a measure $K$. The fact that $K[0, \infty)=1$ is obvious since $L^{(x,y)} = 0$ a.s. The fact that $\int_0^\infty sK(ds) = \infty$ can be verified by repeating the computation leading to \eqref{b1suk}. 
\end{enumerate}
\end{proof}
\begin{os} \normalfont
Note that the condition assumed in Theorem \ref{tecm}
\begin{align}
\int_0^\infty sK_{x,y}(ds) = \infty
\end{align}
together with the complete monotonicity of $t \mapsto \overline{F}_{x,y}(t)$ \br{implies that the the corresponding r.v.'s have densities $\mathpzc{f}_{x,y}(\cdot)$ which are singular at zero.}
\end{os}
\begin{os} \normalfont
Suppose that the measures $K_{x,y} (\cdot)$ have the form the so-called Lamperti distribution \cite{lamperti}, for $\alpha(x,y) \in (0,1)$,
\begin{align}
K_{x,y}(ds) \, = \, \frac{\sin \pi \alpha(x,y)}{\pi} \frac{s^{\alpha(x,y) -1}}{s^{2\alpha(x,y)}+2y^{\alpha(x,y)} \cos \pi \alpha(x,y) +1}ds.
\end{align}
We have \cite{gross}
\begin{align}
\int_0^\infty e^{-s  t} \frac{\sin \pi \alpha(x,y)}{\pi} \frac{s^{\alpha(x,y) -1}}{s^{2\alpha(x,y)}+2y^\alpha(x,y) \cos \pi \alpha(x,y) +1}ds \, = \, E_{\alpha(x,y)}\l - t^{\alpha(x,y)} \r
\end{align}
where
\begin{align}
E_{\alpha(x,y)}\l x \r \, : = \, \sum_{k=0}^\infty \frac{x^k}{\Gamma (\alpha(x,y) k+1)}
\end{align}
is the Mittag-Leffler function. Then since (e.g. \cite[eq. (3.4)]{meerbounded})
\begin{align}
\int_0^\infty e^{-\lambda t} E_{\alpha(x,y)}\l - t^{\alpha(x,y)} \r dt \, = \, \frac{\lambda^{\alpha(x,y)-1}}{1+\lambda^{\alpha(x,y)}}
\end{align}
the representation \eqref{reprf} yields
\begin{align}
\Pi (\lambda, t) \, = \, \sum_{k=0}^{n^J(t)-1} \l \tau_{k+1}-\tau_k \r \lambda^{\alpha(x_k, x_{k+1})} + (t-\tau_{n^J(t)}) \lambda^{\alpha(x_{n^J(t)}, x_{n^J(t)+1})}.
\end{align}
Hence the \br{associated subordinators representing the increments of $\sigma^\Pi$ are $\alpha(x,y)$-stable subordinators conditionally on $X_0=x, \cdots, X_{n+1}=x_{n+1}, \mathcal{T}_1=\tau_1, \cdots, \mathcal{T}_n=\tau_n$}.
\end{os}

\section{The Kolmogorov's equations}
When a semi-Markov process $X$ is given by the time change of a Markov process $M$ by means of an independent inverse subordinator the corresponding Kolmogorov's backward equation has been written down in some different ways. In particular, \cite{meertri} showed that the scale limit of a (suitably defined) continuous-time random walk is of the form $M \l L^f(t) \r$, where $M$ is a L\'evy process generated by $A$, then its pdf $p(x, t)$ solves a governing equation
\begin{align}
\mathbb{C}_f \l \partial_t \r p(x, t) \, = \, Ap(x, t)
\end{align}
where
\begin{align}
\mathbb{C}_f \l \partial_t \r u(t) \, : = \, \mathcal{L}^{-1} \left[ f(\lambda) \widetilde u(\lambda) - \lambda^{-1}f(\lambda) u(0) \right] (t).
\label{meerinv}
\end{align}
In \cite{toaldopota} the author introduced the operator
\begin{align}
\mathcal{D}^f_t q(t) : = \frac{d}{dt} \int_0^t q(s) \, \bar{\nu}(t-s) ds
\end{align}
where
\begin{align}
\bar{\nu}(t) : = a+ \nu(t, \infty)
\end{align}
which can be regularized by subctracting an initial condition (see also \cite[Remark 4.8]{meertri}) as
\begin{align}
\mathfrak{D}_t^f q(t)\, := \, & \mathcal{D}^f_tq(t)- \bar{\nu}(t) q(0).
\label{caputotype}
\end{align}
The operators \eqref{caputotype} and \eqref{meerinv} turn out to be the same operator at least for exponentially bounded continuously differentiable functions (see the discussion following \cite[formula (2.18)]{meertoa}).
Suppose $P_t$ is a semigroup of linear operators on a Banach space $\mathfrak{B}$ corresponding to a Markov process $M$, i.e.,
\begin{align}
\l P_tu \r (x) \, := \, \mathds{E}^xu(X(t))
\end{align}
and then define
\begin{align}
(\Pi_t u)(x)\, = \, \mathds{E}^x u \l M \l L^f(t) \r \r
\label{timenaif}
\end{align} 
where $L^f$ is an independent inverse subordinator.
Using Bochner integrals \eqref{timenaif} is equivalent to
\begin{align}
\Pi_tu \, = \, \int_0^\infty P_su \; l(ds,t)
\end{align}
where $l(ds, t) := P^x \l L^f(t) \in ds \r$. In this case we can use \cite[Theorem 5.1]{toaldopota} to say that $t \mapsto P_tu$ solves
\begin{align}
\mathfrak{D}_t^f q(t) \, = \, Aq(t) \qquad q(0) = u \in \text{Dom}(A).
\label{eqold}
\end{align}
When $f^\star(\lambda):= \lambda /f(\lambda)$ is again a Bernstein function it is proved in \cite{meertoa} (under the additional assumption that $A$ is self-adjoint on an Hilbert space) that equation \eqref{eqold} can be rewritten, for $t>0$, in the evolutionary form
\begin{align}
\frac{d}{dt}q(t) \, = \, \mathcal{D}_t^{f^\star} Aq(t), \qquad q(0) = u,
\label{k2}
\end{align}
where $\mathcal{D}_t^{f^\star}$ is related to $f^\star$ in the same way $\mathcal{D}^f$ is related to $f$. Hence when $M(L^f(t))$  is a semi-Markov process then \eqref{eqold} and \eqref{k2} play the same role of the classical backward Kolmogorov's equation of Markov processes. In \cite{magda} an evolutionary form analogous to \eqref{k2} is proposed. In what follows we provide two integro-differential forms of the Kolmogorov's backward equation for the general class of semi-Markov processes considered above.
\subsection{The backward equation}
Let $C_b (\mathcal{S})$ be the space of bounded functions on $\mathcal{S}$ equipped with the sup-norm $\left\| \cdot \right\|_\infty$ and note that these functions are also continuous since $\mathcal{S}$ is equipped with the discrete topology. We define the Markov semigroup $P_t$ acting on functions $u \in C_b (\mathcal{S})$ associated with the Markov process $M(t)$ as
\begin{align}
(P_tu)(x) : = \mathds{E}^x u(M(t)).
\end{align}
The generator of $P_t$ is the operator $G$ given by
\begin{align}
(G u)(\cdot) \, := \, \theta(\cdot)\int_{\mathcal{S}} \l u(y)-u(\cdot) \r h(\cdot,dy)
\label{defG}
\end{align}
which is a bounded operator with respect to $\left\| \cdot \right\|_\infty$ and hence $P_t$ is strongly continuous on $C_b \l \mathcal{S} \r$ (e.g. \cite[Theorem 1.2, p. 2]{pazy}).
Hence the mapping $t \mapsto q(t) : = P_tu$ is $C^1([0, \infty), C_b(\mathcal{S}))$ and is the unique classical solution to the Cauchy problem (e.g. \cite[Proposition 6.2]{engelnagel})
\begin{align}
\frac{d}{dt}q(t) = Gq(t), \qquad q(0) = u,
\label{mbackcon}
\end{align}
for any $u \in C_b (\mathcal{S})$. In the forthcoming theorems we propose two equivalent very general forms for the Kolmogorov's equations of stepped semi-Markov processes. Hence define the operator $\Pi_t^s$, on the space $C_b(\mathcal{S})$ of continuous and bounded functions on $\mathcal{S}$, as
\begin{align}
(\Pi_t^s u) (x):= \mathds{E}^x \left[ u(X(t)) \mid \gamma^X(0) = s \right] \, = \, \mathds{E}^y \left[ u(X(t+\tau)) \mid X(\tau) =x , \gamma^X(\tau) =s \right]
\end{align}
for any $y \in \mathcal{S}$ and $t, \tau \geq 0$, $s\leq t$. Therefore, in order to study integro-differential equations we need some properties of the operator $\Pi_t^s$ and of the mapping $t \mapsto \Pi_t^su$. In particular we will write the Kolmogorov's equation of the process in two integro-differential forms, and hence we will then consider the case $s=0$, i.e., the mapping
\begin{align}
t \mapsto q(t) := \Pi_t u := \Pi_t^0u.
\end{align}
We have the following two results.
\begin{prop}
Let $u \in C_b\l \mathcal{S}\r$. Then $\Pi_t^su \in C_b \l \mathcal{S} \r$ for any $t \geq 0$.
\end{prop}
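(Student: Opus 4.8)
The statement to prove is: if $u \in C_b(\mathcal{S})$ then $\Pi_t^s u \in C_b(\mathcal{S})$ for every $t \geq 0$. Since $\mathcal{S}$ carries the discrete topology, continuity is automatic for any function on $\mathcal{S}$; the only real content is boundedness, together with the implicit claim that the conditional expectation defining $\Pi_t^s u$ actually makes sense (i.e. is finite) for every $x$. So the whole proof reduces to a uniform bound on $|(\Pi_t^s u)(x)|$ over $x \in \mathcal{S}$.

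First I would write out the definition
\begin{align}
(\Pi_t^s u)(x) \, = \, \mathds{E}^x \left[ u(X(t)) \mid \gamma^X(0) = s \right]
\end{align}
and observe that $X(t) \in \mathcal{S}$ almost surely for each fixed $t$ — this uses the construction in Section \ref{defsemi}, namely $X(t) = X_{N^J(t)}$, where $N^J(t) = \max\{n : T_n \leq t\}$ is a.s. finite because, by hypothesis, the paths of $X$ have only finitely many discontinuities on any finite interval (equivalently $T_n \to \infty$ a.s.). Hence $u(X(t))$ is a well-defined random variable taking values in the range of $u$. Then the triangle inequality for (conditional) expectations gives immediately
\begin{align}
\left| (\Pi_t^s u)(x) \right| \, \leq \, \mathds{E}^x \left[ \left| u(X(t)) \right| \,\middle|\, \gamma^X(0) = s \right] \, \leq \, \left\| u \right\|_\infty,
\end{align}
since $|u(y)| \leq \|u\|_\infty$ for all $y \in \mathcal{S}$ and a conditional expectation of a random variable bounded by a constant is bounded by that same constant. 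Taking the supremum over $x \in \mathcal{S}$ yields $\left\| \Pi_t^s u \right\|_\infty \leq \left\| u \right\|_\infty < \infty$, so $\Pi_t^s u$ is bounded; being a function on a discrete space it is trivially continuous, hence $\Pi_t^s u \in C_b(\mathcal{S})$.

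**Main obstacle.** There is essentially no analytic obstacle here — the estimate is a one-line contraction bound. The only point deserving care is the measure-theoretic well-posedness of the conditioning on $\{\gamma^X(0) = s\}$, which in general is a null event; one should read $\Pi_t^s u$ via the regular conditional law guaranteed by the strong Markov property of the pair $(X(t), \gamma^X(t))$ established in Section \ref{defsemi} (and the time-homogeneity relation \eqref{homo}), so that $s \mapsto (\Pi_t^s u)(x)$ is a genuine measurable version. Once that is granted, the pointwise bound by $\|u\|_\infty$ holds for every $s$ in the support, and the proof is complete. I would therefore keep the argument short: state the representation, invoke a.s. finiteness of $N^J(t)$ so that $u(X(t))$ is well-defined, and conclude by the conditional-expectation contraction estimate.
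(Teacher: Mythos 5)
Your argument is correct for the statement as written, but it proceeds along a genuinely different (and more elementary) route than the paper. You observe that on the discrete space $\mathcal{S}$ continuity is automatic, so the only content is the contraction bound $\left|(\Pi_t^s u)(x)\right| \leq \left\| u \right\|_\infty$, which follows from $|u|\leq \|u\|_\infty$ once $u(X(t))$ is well defined (a.s. finiteness of $N^J(t)$) and once the conditioning on $\{\gamma^X(0)=s\}$ is read through the Markov family of the pair $(X(t),\gamma^X(t))$ — a point you correctly flag. The paper instead embeds $\Pi_t^s$ into the semigroup $T_t$ of the strict Markov pair process defined in \eqref{semcoppia}, sets $h(x,y)=u(x)$, identifies $(T_th)(x,s)=(\Pi_t^s u)(x)$ as in \eqref{4166}, and invokes the Gihman--Skorohod mapping theorem asserting that $T_t$ preserves $C_b(\mathcal{S}\times\mathbb{R}^+)$. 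That heavier machinery yields strictly more than the statement: it gives continuity of $s \mapsto (\Pi_t^s u)(x)$ in the backward recurrence variable and introduces the Feller semigroup $T_t$ and its generator, which are exactly what is exploited in Proposition \ref{tediff}. So your proof buys brevity and self-containedness for this proposition, while the paper's proof is really a set-up step whose extra regularity in $s$ is used immediately afterwards; as a proof of the stated claim alone, yours is complete.
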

\begin{proof}
Consider the semigroup of operators associated with the strict Markov process $\l X(t), \gamma^X(t) \r$, i.e., the operators
\begin{align}
(T_t h)(x,s) : = \mathds{E}^{x} \left[ h(X(t), \gamma^X(t)) \mid \gamma^X (0) = s\right]
\label{semcoppia}
\end{align}
on the space $C_b(\mathcal{S} \times \mathbb{R}^+)$. Continuity of functions $h(x,s)$ on $\mathcal{S} \times \mathbb{R}^+$ simply means continuity in $s$ since $\mathcal{S}$ is equipped with the discrete topology. By \cite[Theorem 1, p. 239]{gihman} we know that $T_t$ maps $C_b \l \mathcal{S} \times \mathbb{R}^+ \r$ into itself. Now set $h(x,y) = u(x)$ for any $y \geq 0$ and note that
\begin{align}
(T_t h)(x,s) \, = \,& \int_{\mathcal{S}\times \mathbb{R}^+} u(y) P^{x} \l X(t) \in dy, \gamma^X (t) \in dw \mid \gamma^X (0) =s \r \notag \\
= \, & \int_{\mathcal{S}} u(y) P^{x} \l X(t) \in dy \mid \gamma^X (0) =s \r \notag \\
= \, &(\Pi_t^s u)(x).
\label{4166}
\end{align}
Since $T_th \in C_b \l \mathcal{S} \times \mathbb{R}^+ \r$ we have by \eqref{4166} that $\Pi_t^s u \in C_b \l \mathcal{S} \r $.
\end{proof}
\begin{prop}
\label{tediff}
Let $u \in C_b(\mathcal{S})$.
If $u$ is such that
\begin{align}
\lim_{t \to 0+} \frac{1}{t}\l \Pi_t^s u-u \r
\end{align}
exists as strong limit in $C_b \l \mathcal{S} \times [0, \infty) \r$ and if $t \mapsto F_{x,y}(t)$ is continuous for any $(x,y) \in \mathcal{S} \times \mathcal{S}$, then the mapping $t \mapsto \Pi_t^su$ is $C^1 \l [0, \infty), C_b (\mathcal{S}) \r$.
\end{prop}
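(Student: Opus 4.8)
The plan is to identify $t\mapsto\Pi_t^su$ with the orbit, under the transition semigroup of the strict Markov process $\l X(t),\gamma^X(t)\r$, of a fixed function, and then to invoke the classical fact that orbits of elements of the domain of the generator of a strongly continuous semigroup are of class $C^1$. First I would proceed exactly as in the proof of the preceding Proposition: set $h(x,y):=u(x)$, so that $h\in C_b(\mathcal{S}\times[0,\infty))$, and let $T_t$ be the operators in \eqref{semcoppia} associated with $\l X(t),\gamma^X(t)\r$. The strict Markov property of the couple gives the semigroup identity $T_{t+\tau}=T_tT_\tau$; each $T_t$ is a contraction, $\|T_tg\|_\infty\le\|g\|_\infty$; and by \cite[Theorem 1, p. 239]{gihman} --- this is the point at which the continuity of $t\mapsto F_{x,y}(t)$ enters --- $T_t$ maps $C_b(\mathcal{S}\times[0,\infty))$ into itself. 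By \eqref{4166} we have $T_th=\Pi_t^su$ as an element of $C_b(\mathcal{S}\times[0,\infty))$ (a function of $(x,s)$), and the first hypothesis is precisely the statement that $t^{-1}\l T_th-h\r$ converges in $C_b(\mathcal{S}\times[0,\infty))$, as $t\to0+$, to some element $v$.

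Because $C_b(\mathcal{S}\times[0,\infty))$ is in general non-separable and the generator of the couple is unbounded, $\l T_t\r$ need not be strongly continuous on the whole space, so I would pass to the subspace $\mathfrak{B}_0:=\{g\in C_b(\mathcal{S}\times[0,\infty)):\|T_tg-g\|_\infty\to0\ \text{as}\ t\to0+\}$. Using $\|T_t\|\le1$ one checks that $\mathfrak{B}_0$ is closed and $T_t$-invariant and that $\l T_t|_{\mathfrak{B}_0}\r$ is a strongly continuous semigroup, the semigroup law propagating right-continuity at the origin to continuity on all of $[0,\infty)$. Since $t^{-1}\l T_th-h\r\to v$ forces $T_th-h\to0$, we get $h\in\mathfrak{B}_0$, and then $h\in\text{Dom}(\mathcal{A})$ with $\mathcal{A}h=v$, where $\mathcal{A}$ is the generator of $\l T_t|_{\mathfrak{B}_0}\r$.

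At this point the standard theory of strongly continuous semigroups (cf. \cite{pazy}, \cite{engelnagel}) applies: for $h\in\text{Dom}(\mathcal{A})$ the map $t\mapsto T_th$ is of class $C^1([0,\infty),\mathfrak{B}_0)$, with $\frac{d}{dt}T_th=\mathcal{A}T_th=T_t\mathcal{A}h=T_tv$, and $t\mapsto T_tv$ is continuous because $v\in\mathfrak{B}_0$ and $\l T_t|_{\mathfrak{B}_0}\r$ is strongly continuous. As the inclusion $\mathfrak{B}_0\hookrightarrow C_b(\mathcal{S}\times[0,\infty))$ is bounded, $t\mapsto T_th=\Pi_t^su$ is $C^1([0,\infty),C_b(\mathcal{S}\times[0,\infty)))$. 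Finally, for each fixed $s$ the section map $R_s:C_b(\mathcal{S}\times[0,\infty))\to C_b(\mathcal{S})$, $R_sg:=g(\cdot,s)$, is bounded and linear, so composing it with the curve just obtained shows that $t\mapsto\Pi_t^su$, regarded now as a $C_b(\mathcal{S})$-valued map, is $C^1([0,\infty),C_b(\mathcal{S}))$, which is the assertion.

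The one step that is not purely formal is the second one: one cannot simply quote the $C_0$-semigroup machinery on $C_b(\mathcal{S}\times[0,\infty))$ itself, and the crux is to observe that the differentiability hypothesis automatically places $h$ in the strongly continuous part $\mathfrak{B}_0$. Alternatively, one could avoid $\mathfrak{B}_0$ altogether by establishing directly, from the continuity of the $F_{x,y}$ --- equivalently, from the absence of atoms in the holding-time laws, which makes $\l X(t),\gamma^X(t)\r$ stochastically continuous --- that $\l T_t\r$ is strongly continuous on all of $C_b(\mathcal{S}\times[0,\infty))$; in any event that continuity assumption is exactly what guarantees that the $s$-sections $R_s(T_th)$ stay inside $C_b(\mathcal{S})$ and depend continuously on $s$.
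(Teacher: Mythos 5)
Your argument is correct and follows essentially the same route as the paper: both identify $\Pi_t^s u$ with the orbit $T_t h$, $h(x,s):=u(x)$, of the semigroup \eqref{semcoppia} of the strict Markov couple $\l X(t),\gamma^X(t)\r$, use \cite[Theorem 1, p. 239]{gihman} (where the continuity of $F_{x,y}$ enters), observe that the differentiability hypothesis places $h$ in the domain of the generator, and conclude by the classical fact that orbits of domain elements of a strongly continuous semigroup are $C^1$. The only difference is a technical refinement: where the paper simply asserts that $T_t$ is a strongly continuous Feller semigroup on $C_b(\mathcal{S}\times\mathbb{R}^+)$, you restrict to the closed invariant subspace $\mathfrak{B}_0$ of strong continuity and note that the hypothesis automatically forces $h\in\mathfrak{B}_0$, which makes the appeal to the $C_0$-semigroup machinery (and the final passage to $C_b(\mathcal{S})$ via the section map) airtight without changing the substance of the proof.
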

\begin{proof}
Consider again the semigroup of operators associated with the strict Markov process $\l X(t), \gamma^X(t) \r$, i.e., the operators defined in \eqref{semcoppia}, on the space $C_b(\mathcal{S} \times \mathbb{R}^+)$. Recall again that continuity of functions $h(x,s)$ on $\mathcal{S} \times \mathbb{R}^+$ simply means continuity in $s$ since $\mathcal{S}$ is equipped with the discrete topology. The family $\l T_t \r_{t \geq 0}$ forms a (strongly continuous) $C_b(\mathcal{S}\times \mathbb{R}^+)$-Feller semigroup (see \cite[formula (33) p. 238 and Theorem 1 p. 239]{gihman}). Denote by $G^T$ the generator of $T_t$. Now set $h(x,y) = u(x)$ for any $y \in \mathbb{R}^+$. We have by \eqref{4166}
\begin{align}
(T_t h)(x,s) \, = \, (\Pi_t^su )(x).
\end{align}
Since $T_t$ is a strongly continuous Feller semigroup we know that the mapping $t \mapsto T_th$ is $C^1$ for any $h \in C_b(\mathcal{S} \times \mathbb{R}^+)$ in the domain of $G^T$ and hence it is sufficient to prove that $h(x,s):=u(x)$ is $C_b (\mathcal{S} \times \mathbb{R}^+)$ and is in the domain of $G^T$. The fact that $h(x,s) = u(x) \in C_b(\mathcal{S} \times \mathbb{R}^+)$ is easy to see since we assume that $u(x) \in C_b(\mathcal{S})$. Now we prove that $h(x,s) = u(x) \in \text{Dom}(G^T)$. This can be shown by considering, as $t \to 0$,
\begin{align}
\left\| \frac{1}{t} \l T_th-h \r  \right\| \, = \, \left\| \frac{1}{t} \l \Pi_t^s u-u\r \right\|
\label{limit}
\end{align}
and the limit on the right-hand side of \eqref{limit} exists by the hypotheses for any $s$.
\end{proof}
The following Theorem provides the first version of the Kolmogorov's equation for semi-Markov processes in the form of an integro-differential Volterra equation. We remark that from now on it will be always true that the c.d.f. $F_{x,y}$ is independent on $y$.
\begin{te}
\label{teclassiceq}
Let $u$ be as in Proposition \ref{tediff}. Assume that for each $x \in \mathcal{S}$ it is true that $\overline{F}_x(t) = \mathds{E}^y e^{-\theta(x)L^x(t)} $ for some $f(\lambda, x)$, with $\nu((0, \infty),x) = \infty$ and $s \mapsto \bar{\nu}(s,x)$ continuous for any $x$.
The mapping $t \mapsto q(t):=\Pi_tu$ satisfies the initial value problem
\begin{align}
\mathfrak{D}_t^{\bm{\cdot}} q(t) \, = \, G q(t), \qquad q(0) = u,
\label{classiceqgen}
\end{align}
where
\begin{align}
\l \mathfrak{D}_t q(t) \r \, (\cdot) \, : = \, \frac{d}{dt} \int_0^t q(s, \cdot) \, \bar{\nu}(t-s, \cdot) \, ds \,- \, \bar{\nu}(t, \cdot)q(0, \cdot)
\end{align}
with $q(s, \cdot):= (\Pi_s u)(\cdot)$ and $G$ is defined in \eqref{defG}.
\end{te}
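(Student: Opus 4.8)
The plan is to derive, by a first-jump (renewal) decomposition of $X$, a Volterra integral equation for $q(t,\cdot)=\Pi_tu$, and then to convert it into the stated form \eqref{classiceqgen} by passing through Laplace transforms. Started at $x$ with $\gamma^X(0)=0$, the process $X$ remains at $x$ for a holding time $J_0$ with survival function $\overline{F}_x$ — here the standing assumption that $F_{x,y}$ does not depend on $y$ is essential — and then jumps to a state $y$ selected according to $h(x,dy)$; by the time-homogeneity \eqref{homo} and the strong Markov property of the pair $\l X(t),\gamma^X(t)\r$, from that instant the trajectory is an independent copy of $X$ started at $y$ with $\gamma^X=0$. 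Writing $(Hv)(x):=\int_{\mathcal{S}}v(y)\,h(x,dy)$ and recalling that, under the present hypotheses, $J_0$ has a Lebesgue density $\mathpzc{f}_x$ (cf.\ the Remark following Theorem \ref{tecm}), this decomposition yields
\begin{align}
q(t,x) \, = \, u(x)\,\overline{F}_x(t) \, + \, \int_0^t \l Hq(t-s)\r(x)\,\mathpzc{f}_x(s)\,ds, \qquad x\in\mathcal{S},\ t\geq 0. \label{renfirstjump}
\end{align}
This is well posed since $q(r,\cdot)\in C_b(\mathcal{S})$ for every $r\geq 0$ (established above) and $H$ is a contraction of $C_b(\mathcal{S})$ into itself.

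Next I would take the Laplace transform of \eqref{renfirstjump} in $t$. Using $\int_0^\infty e^{-\lambda s}\mathpzc{f}_x(s)\,ds=1-\lambda\widetilde{\overline{F}}_x(\lambda)$, it becomes
\begin{align}
\widetilde{q}(\lambda,x) \, = \, u(x)\,\widetilde{\overline{F}}_x(\lambda) \, + \, \l 1-\lambda\widetilde{\overline{F}}_x(\lambda)\r\l H\widetilde{q}(\lambda)\r(x). \label{renlaplace}
\end{align}
The hypothesis $\overline{F}_x(t)=\mathds{E}^ye^{-\theta(x)L^x(t)}$ and \eqref{319} give
\begin{align}
\widetilde{\overline{F}}_x(\lambda) \, = \, \frac{1}{\lambda}\,\frac{f(\lambda,x)}{\theta(x)+f(\lambda,x)}, \qquad 1-\lambda\widetilde{\overline{F}}_x(\lambda) \, = \, \frac{\theta(x)}{\theta(x)+f(\lambda,x)}. \label{laplapair}
\end{align}
Substituting \eqref{laplapair} into \eqref{renlaplace}, multiplying through by $\theta(x)+f(\lambda,x)$ and rearranging, the terms carrying $H\widetilde{q}$ combine with $-\theta(x)\widetilde{q}(\lambda,x)$ to reconstruct the generator $G$ of \eqref{defG}:
\begin{align}
f(\lambda,x)\,\widetilde{q}(\lambda,x) \, - \, \frac{1}{\lambda}f(\lambda,x)\,u(x) \, = \, \theta(x)\int_{\mathcal{S}}\l\widetilde{q}(\lambda,y)-\widetilde{q}(\lambda,x)\r h(x,dy) \, = \, \l G\widetilde{q}(\lambda)\r(x). \label{renfinal}
\end{align}

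It remains to invert \eqref{renfinal}. By \eqref{meerinv}, its left-hand side is the Laplace transform of $\mathbb{C}_{f(\cdot,x)}\l\partial_t\r q(\cdot,x)$ (note $q(0,x)=u(x)$); since $t\mapsto q(t,x)=(\Pi_tu)(x)$ is $C^1$ by Proposition \ref{tediff} and bounded, hence exponentially bounded, the operator identity recalled in the discussion following \eqref{caputotype} lets me replace $\mathbb{C}_{f(\cdot,x)}\l\partial_t\r$ by $\mathfrak{D}_t^{f(\cdot,x)}$, that is, by $\l\mathfrak{D}_t^{\bm{\cdot}}q(t)\r(x)$. The right-hand side $t\mapsto(Gq(t))(x)$ is continuous, because $q$ is $C^1$ and $G$ is a bounded operator on $C_b(\mathcal{S})$. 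Two continuous functions of $t$ with the same Laplace transform coincide, so \eqref{classiceqgen} follows; the initial condition $q(0)=\Pi_0u=u$ is immediate since $X(0)=x$ holds $P^x$-almost surely.

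The step I expect to be the main obstacle is the rigorous justification of \eqref{renfirstjump}: one must appeal to the regenerative structure of the stepped semi-Markov process at the first jump epoch $J_0$ — so that, conditionally on $X(J_0)=y$, the post-$J_0$ piece of the path is an independent copy of $X$ started from $y$ with $\gamma^X=0$ — and make the conditioning and the interchange of integrations legitimate when $\mathcal{S}$ is uncountable, which is exactly where the Feller property of $\l X(t),\gamma^X(t)\r$ from \cite{gihman} and the choice of $C_b(\mathcal{S})$ as working space come in. Everything after \eqref{renfirstjump} is Laplace-transform bookkeeping combined with the operator identities and regularity facts already established.
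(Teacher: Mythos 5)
Your proposal is correct and follows essentially the same route as the paper: your first-jump equation \eqref{renfirstjump} is exactly the renewal equation \eqref{koroswi} (which the paper simply cites from \cite[eq. (1.52), p. 20]{koro} rather than re-deriving), and the subsequent Laplace-transform bookkeeping via \eqref{319}--\eqref{3200} leading to the resolvent identity \eqref{rearranged} is identical. Your inversion step is merely a more explicit version of the paper's ``easy to check'' closing remark, using the equivalence of $\mathbb{C}_f(\partial_t)$ and $\mathfrak{D}^f_t$ on exponentially bounded $C^1$ functions together with uniqueness of Laplace transforms for continuous functions.
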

\begin{proof}
Under our assumptions on $L^x (t)$ (section \ref{sectimech}) we have that the functions $t \mapsto \mathds{E}e^{-\theta(x)L^{x}(t)}$ are continuous functions (see the proof of \cite[Theorem 3.1]{meertri}). Hence the conditions of Proposition \ref{tediff} are satisfied and $t \mapsto q(t)$ is continuously differentiable. This fact together with the continuity of $s \mapsto \bar{\nu}(s)$ ensures that the operator $t \mapsto  \mathfrak{D}_t^{\bm{\cdot}}q(t)$ is well defined.
By \cite[eq. (1.52), p. 20]{koro} we know that the mapping $q(t)$ satisfies
\begin{align}
(\Pi_t u)(x) = \overline{F}_x (t) u(x) + \int_0^t \int_{\mathcal{S}} (\Pi_{t-s}u )(y) \, h(x,dy) \, \mathpzc{f}_x(s)ds.
\label{koroswi}
\end{align}
Let $(\mathpzc{R}_\lambda u)(x) := \int_0^\infty e^{-\lambda t} (\Pi_t u)(x) dt$. By \eqref{319}, \eqref{3200} and the convolution theorem for Laplace transform we can take Laplace transform in \eqref{koroswi} to write
\begin{align}
(\mathpzc{R}_\lambda u)(x) = \frac{f(\lambda,x)}{\lambda} \frac{1}{\theta(x)+f(\lambda, x)}u(x) \, + \, \frac{\theta(x)}{\theta(x)+f(\lambda, x)} \int_\mathcal{S} (\mathpzc{R}_\lambda u) (y) h(x,dy).
\label{laplrlambda}
\end{align}
By rearranging \eqref{laplrlambda} we get
\begin{align}
f(\lambda, x) (\mathpzc{R}_\lambda u) (x) - \lambda^{-1}f(\lambda, x) u(x) \, = \, G (\mathpzc{R}_\lambda u)(x).
\label{rearranged}
\end{align}
It is easy to check that \eqref{rearranged} is the Laplace transform of \eqref{classiceqgen} by using again the convolution theorem and the fact that
\begin{align}
\int_0^\infty e^{-\lambda t} \bar{\nu}(t,x) \, dt \, = \, \lambda^{-1} f(\lambda, x).
\end{align}
\end{proof}
\begin{os} \normalfont
\label{remfrac}
We remark that the operator $\mathfrak{D}_t^{\bm{\cdot}}g(t)$ acting on functions $g:[0, \infty) \mapsto C_b(\mathcal{S})$ can be interpreted as a generalized ``variable-order" fractional derivative. Suppose that for any $x$
\begin{align}
\overline{F}_x(t) \, = \, E_{\alpha(x)}(-\theta(x) t^{\alpha(x)}),
\label{mittsf}
\end{align}
where 
\begin{align}
E_\alpha(z) : = \sum_{k=0}^\infty \frac{z^k}{\Gamma(\alpha k+1)}
\end{align}
is the Mittag-Leffler function. We obtain from \eqref{mittsf} the Laplace transform
\begin{align}
\widetilde{\overline{F}}_x(\lambda) \, = \, \lambda^{\alpha(x)-1} \frac{1}{\theta(x)+\lambda^{\alpha(x)}}.
\end{align}
Furthermore since \eqref{mittsf} is completely monotone and with first derivative diverging at zero (e.g. \cite[Section 3.1]{mainardimittag}) we can apply Theorem \ref{tecm} to say that $\overline{F}_x(t) = \mathds{E}^ze^{-\theta(x)L^{(x)}(t)}$ where $L^{(x)}(t)$ are a family of inverses of $\alpha(x)$-stable subordinators, i.e., the representation \eqref{reprf} yields to
\begin{align}
f(\lambda,x) \, = \, \lambda^{\alpha(x)}.
\end{align}
This implies $\bar{\nu}(s, x) = s^{-\alpha(x)}/\Gamma(1-\alpha(x))$ and hence
\begin{align}
\l \mathfrak{D}_t^{\bm{\cdot}} g(t) \r (\cdot) \, = \,& \frac{1}{\Gamma (1-\alpha (\cdot))} \frac{d}{dt} \int_0^t g(s,\cdot) \, (t-s)^{-\alpha(\cdot)}ds - \frac{t^{-\alpha(\cdot)}}{\Gamma(1-\alpha(\cdot))}u(\cdot)\label{fracvarord} \\
 =: \, & \l \frac{d^{\alpha(\cdot)}}{dt^{\alpha(\cdot)}}g(t) \r (\cdot)\notag.
\end{align}
In formula \eqref{fracvarord} we recognize the fractional derivative in the sense of Caputo-Dzerbayshan (e.g. \cite[eq. (6.1.39)]{kilbas}).
\end{os}
We show now that also eq \eqref{k2} can be extended to our framework. Since we do not assume in this paper that our subordinators are special (as it was assumed in \cite{meertoa}) we write here an equation on the same line of \eqref{k2} valid for general subordinators.
We consider here the operator $\mathcal{D}_t^\star g(t)$, acting on functions $g:[0, \infty) \mapsto C_b(\mathcal{S})$, given by
\begin{align}
\l \mathcal{D}_t^\star g(t) \r (\cdot) \, : = \, \frac{d}{dt} \int_0^t g(\cdot,s) \, u^f(t-s,\cdot) \, ds
\label{cappotmeas}
\end{align}
where $u^f(s,\cdot)$ is the potential density of a subordinator with Laplace exponent $f(\lambda, \cdot)$, $\cdot \in \mathcal{S}$. Of course it is not necessarily true that such a density exists. For example if $s \mapsto \bar{\nu}(s,\cdot)$ is absolutely continuous and $\nu((0, \infty), \cdot) = \infty$ then the density exists since the distribution of $\sigma^{(\cdot)} (t)$ has a density (by \cite[Theorem 27.7]{satolevy}) which we denote by $s \mapsto \mu(s, t)$ and hence we have
\begin{align}
s \mapsto u^f(s,\cdot)ds \, = \, \mathds{E}^x \int_0^\infty \mathds{1}_{\left[ \sigma^{(\cdot)}(t) \in ds \right]} dt \, = \, ds\int_0^\infty \mu (s, t) dt.
\end{align}
Using \eqref{cappotmeas} we have the following evolutionary form for the equation \eqref{classiceqgen}.
\begin{te}
\label{teeqevol}
Let $u$ be as in Proposition \ref{tediff}. Assume that for any $x \in \mathcal{S}$ it is true that $\overline{F}_{x,y}(t) =\mathds{E}^{y}e^{-\theta(x)L^{(x)}(t)}$ for some $f(\lambda, x)$ with $\nu((0, \infty),x) = \infty$ and assume that there exists a potential density $s \mapsto u^f(s,x)$ continuous.
Then $t \mapsto q(t)$ satisfies the evolutionary Cauchy problem
\begin{align}
\frac{d}{dt} q(t) \, = \,\mathcal{D}_t^\star  \;G   q(t), \qquad q(0) = u.
\label{evoleq}
\end{align}
\end{te}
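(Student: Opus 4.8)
The plan is to verify \eqref{evoleq} by passing to Laplace transforms, exactly as in the proof of Theorem \ref{teclassiceq}, and then showing that the Laplace-transformed identity coincides with \eqref{rearranged} after an algebraic rearrangement. First I would recall from the proof of Theorem \ref{teclassiceq} that under the present hypotheses the mapping $t \mapsto q(t) = \Pi_t u$ is $C^1([0,\infty), C_b(\mathcal{S}))$, so the left-hand side $\frac{d}{dt} q(t)$ is well defined; I would also note that the continuity of $s \mapsto u^f(s,x)$, together with the $C^1$ regularity of $q$, guarantees that the convolution $\int_0^t g(\cdot, s)\, u^f(t-s,\cdot)\, ds$ is differentiable in $t$, so that $\mathcal{D}_t^\star G q(t)$ is well defined (here one uses that $Gq(t)$ is again continuous in $t$ because $G$ is bounded). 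This is the routine preliminary step.

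Next I would compute the Laplace transform of the potential density. Since $u^f(s,x)\,ds = \mathds{E}^x \int_0^\infty \mathds{1}_{[\sigma^{(x)}(t)\in ds]}\,dt$, one has
\begin{align}
\int_0^\infty e^{-\lambda s} u^f(s,x)\, ds \, = \, \int_0^\infty \mathds{E}^x e^{-\lambda \sigma^{(x)}(t)}\, dt \, = \, \int_0^\infty e^{-t f(\lambda,x)}\, dt \, = \, \frac{1}{f(\lambda,x)}.
\end{align}
Hence, writing $(\mathpzc{R}_\lambda u)(x) = \int_0^\infty e^{-\lambda t}(\Pi_t u)(x)\, dt$ and using the convolution theorem, the Laplace transform of $\mathcal{D}_t^\star G q(t)$ at the point $x$ is $\lambda \cdot \frac{1}{f(\lambda,x)} (G \mathpzc{R}_\lambda u)(x) - (G u)(x)/f(\lambda,x)$, where the subtracted term comes from the derivative of the convolution at $t=0$ (the value of the convolution at $0$ is $0$, so in fact the transform of $\frac{d}{dt}\int_0^t (Gq)(s)\,u^f(t-s)\,ds$ is simply $\lambda \cdot \frac{1}{f(\lambda,x)}(G\mathpzc{R}_\lambda u)(x)$; I would be careful here and use $\int_0^t$ starting the convolution at $0$). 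Meanwhile the Laplace transform of $\frac{d}{dt} q(t)$ is $\lambda (\mathpzc{R}_\lambda u)(x) - u(x)$. Equating the two and multiplying through by $f(\lambda,x)/\lambda$ gives precisely
\begin{align}
f(\lambda,x)(\mathpzc{R}_\lambda u)(x) - \lambda^{-1} f(\lambda,x) u(x) \, = \, (G\mathpzc{R}_\lambda u)(x),
\end{align}
which is exactly \eqref{rearranged}, already established in the proof of Theorem \ref{teclassiceq} from the renewal equation \eqref{koroswi}. Thus \eqref{evoleq} holds after inverting the Laplace transform.

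The main obstacle I anticipate is the justification that the Laplace transform is invertible here, i.e. that two continuous (indeed $C^1$) $C_b(\mathcal{S})$-valued functions of $t$ with the same Laplace transform must coincide, and more delicately that $\mathcal{D}_t^\star G q(t)$ is genuinely a continuous function of $t$ so that this uniqueness applies — this rests on showing that $t \mapsto \int_0^t (Gq)(s)\, u^f(t-s,\cdot)\, ds$ is $C^1$, which needs the assumed continuity of the potential density and perhaps an integrability estimate near $s=0$ (where $u^f$ may blow up, since $\nu((0,\infty),x)=\infty$). I would handle this by noting $u^f(\cdot,x)$ is locally integrable (its Laplace transform $1/f(\lambda,x)$ is finite for $\lambda>0$ and $f(\lambda,x)\to\infty$), so the convolution with the bounded continuous $Gq$ is well defined and continuous, and differentiability in $t$ follows from the boundedness of $G$ and the $C^1$ regularity of $q$ by a standard argument (write the difference quotient, use $q(s+h)-q(s) = \int_s^{s+h} q'(r)\,dr$ and dominated convergence). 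The rest is bookkeeping with the convolution theorem.
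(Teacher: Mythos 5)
Your proposal is correct and follows essentially the same route as the paper: both proofs reduce \eqref{evoleq}, via the convolution theorem and the identity $\int_0^\infty e^{-\lambda s}u^f(s,\cdot)\,ds = 1/f(\lambda,\cdot)$, to the resolvent identity \eqref{rearranged} multiplied by $\lambda/f(\lambda,x)$, i.e.\ to \eqref{427}. Your extra remarks on the well-definedness of $\mathcal{D}_t^\star G q(t)$ and on uniqueness of Laplace transforms only make explicit what the paper treats briefly.
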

\begin{proof}
Of course the operator $\mathcal{D}_t^\star q(t)$ is well defined since $t \mapsto q(t)$ is continuously differentiable and $s \mapsto u^f(s, \cdot)$ is continuous. Hence to prove this theorem it is sufficient to rearrange \eqref{rearranged} multiplying both sides by $\lambda /f(\lambda, x)$ to write
\begin{align}
\lambda (\mathpzc{R}_\lambda u)(x) - u (x) \, = \, \frac{\lambda}{f(\lambda, x)} G (\mathpzc{R}_\lambda u) (x).
\label{427}
\end{align}
By the convolution theorem for the Laplace transform and since the Laplace transform of the potential density $u^f(s, \cdot)$ is
\begin{align}
\int_0^\infty e^{-\lambda s} u^f(s, \cdot) \, ds \, = \, \frac{1}{f(\lambda, \cdot)},
\end{align}
it is easy to see that \eqref{427} is the Laplace transform of \eqref{evoleq}.
\end{proof}
\begin{os} \normalfont
Suppose we are in the situation of Remark \ref{remfrac}. Then the potential densities corresponding to the $\alpha(x)$-stable subordinators are
\begin{align}
u^f(s, \cdot) \, = \, \frac{s^{\alpha(\cdot)-1}}{\Gamma(\alpha(\cdot))}.
\end{align}
Hence the operator $\mathcal{D}^\star $ becomes
\begin{align}
\l \mathcal{D}_t^\star q(t) \r (\cdot) \, = \, \frac{d^{1-\alpha(\cdot)}}{dt^{1-\alpha(\cdot)}}q(t, \cdot)
\end{align}
but the fractional derivative must be meant now in the Riemann-Liouville sense (e.g. \cite[p. 69]{kilbas}).
\end{os}
\begin{ex}[Time-changed Poisson process]
\label{expoieq}
Suppose that the r.v.'s $J_i$ have distribution $P^x (J_i > t \mid X_n = z, X_{n+1} = y) = e^{-\theta t}$ and that $X_n$ is a discrete Markov chain in $\mathbb{R}^d$ with transition probabilities $h(x,dy) = \delta_{x+l}(dy)$ where $\delta_z(\cdot)$ is the Dirac point mass at $z \in \mathbb{R}^d$. Then define
\begin{align}
N^l(t) \, = \, X_n, \qquad \sum_{i=0}^n J_i \leq t < \sum_{i=0}^{n+1} J_i.
\end{align}
Hence $N^l(t)$ is a Poisson process in $\mathbb{R}^d$ with jump height $l \in \mathbb{R}^d$.
Now let $\mathpzc{J}_i$ be independent r.v.'s with distribution
\begin{align}
P^x \l \mathpzc{J}_n > t \mid X_n=z, X_{n+1} = y \r \, = \,  \overline{F}_z(t)
\end{align}
and define
\begin{align}
\mathpzc{N}^l(t) \, = \, X_n, \qquad \sum_{i=0}^n \mathpzc{J}_i \leq t < \sum_{i=0}^{n+1} \mathpzc{J}_i.
\end{align}
Hence $\mathpzc{N}^l (t)$ is a semi-Markov process in the sense of Gihman and Skorohod defined as in Section \ref{defsemi}.
Now assume that $t \mapsto \overline{F}_y(t)$ is completely monotone. Hence by Theorem \ref{tecm} we know that there exists a process $L^\Pi$ defined as in section \ref{sectimech} by setting for any $x \in \mathbb{R}^d$,
\begin{align}
f(\lambda, x) = \frac{\lambda\widetilde{\overline{F}}_x (\lambda)}{1-\lambda \widetilde {\overline{F}}_x(\lambda)},
\label{easyto}
\end{align}
such that $\mathpzc{N}^l(t)$ and $N^l \l L^\Pi (t) \r $ are the same process. Furthermore since we have by Theorem \ref{tecm} that $\lambda \mapsto f(\lambda, x)$ is, for any $x \in \mathbb{R}^d$, a complete Bernstein function then we know that the L\'evy measures $\nu(\cdot,x)$ have a completely monotone density and hence also the tails $s \mapsto \bar{\nu}_x(s)$ are completely monotone. Furthermore also the potential densities $s \mapsto u^f(s, x)$ exist and are completely monotone \cite[Remark 10.6]{librobern}, for any $x \in \mathbb{R}^d$. Hence all the continuity properties are satisfied and we can apply Theorems \ref{teclassiceq} and \ref{teeqevol} to say that the mapping $t \mapsto q(t,x): = \mathds{E}^xu(\mathpzc{N}^l(t))$, for $u \in C_b \l \mathbb{R}^d \r$ satisfying the condition of Proposition \ref{tediff}, solves
\begin{align}
\frac{d}{dt} \int_0^t q(s,x) \, \bar{\nu}(t-s,x)ds-\bar{\nu}(t,x)q(0,x) \, = \, \frac{\theta}{|l|} \l q(t,x+l)-q(t,x)\r,
\end{align}
under $q(0,x) = u(x)$, as well as
\begin{align}
\frac{d}{dt} q(t,x) \, = \,\frac{\theta}{|l|} \frac{d}{dt} \int_0^t \l q(s,x+l)-q(s,x) \r \, u^f(t-s,x) \, ds,
\end{align}
under $q(0,x) = u(x)$.
See also Garra et al. \cite{garra} for a similar equation related to a state-dependent Poisson model.
\end{ex}
\section{Geneneral semi-Markov processes as limit of stepped semi-Markov processes}
In the present section we study weak limits of  semi-Markov processes defined above. Denoting by $M^c$  a stepped Markov process depending on a parameter $c$, and by $X^c(t)=M^c (L(t))$ the semi-Markov process obtained by the time-change with a dependent random time $L^\Pi$, we study the limit as $c\to 0$ of the single time distribution of $X^c$. Formally, assuming that the generator $G^c$ of the semigroup $P_t^c$ associated with $M^c$ converges to a generator $G$ on a suitable function space, we are able to study the convergence of $\Pi_t^cu$ for $u \in C_b (\mathcal{S})$.

Then by assigning to the parameter $c$ different meanings we can reexamine some classical results on the convergence of Markov processes, in our semi-Markov framework.
It is a known fact that any L\'evy process is the limit as $c\to 0$ of a suitable sequence of compound Poisson processes, say $M^c$, where $c$ represents the lower bound for the jump sizes; an analogous result holds for L\'evy-type processes in the sense of \cite[p. 366]{kolokoltsov}, i.e., jump processes whose L\'evy measures not only depend on the jump size, but also on the current position. Moreover, we remind that some stepped Markov processes are continuous time random walks \br{in $\mathbb{R}^d$ jumping on a lattice of size $c$}. By letting $c\to 0$ (and simultaneously scaling the intensity of jumps) \br{one obtains processes whose paths are no more stepped functions}. Following this procedure, the Brownian motion is known to be obtained as a limit of symmetric random walks.
Hence we study here the semi-Markov analogue of these facts, where a sequence of semi-Markov processes $X^c$ satisfies the assumptions in Theorem \ref{tetimech} or \ref{tecm} and hence $M^c$ is time-changed by a dependent process $L^\Pi$. 

An interesting case is the one leading to a diffusive limit. Very recently, indeed, fractional diffusion equations have been derived, exhibiting a  variable order fractional derivative (see \cite{baestra, checgore, fedofalco}). These are models of anomalous diffusion in inhomogeneous media. In the heuristic discussion in \cite{checgore}, it is supposed that each lattice site has a trapping effect which leads to a sub-diffusive dynamics; in particular, the distribution of the sojourn time is here assumed to depend on the site itself, and this leads to a state dependent anomalous exponent. The study of this matter seems to be still at an early stage. Our results specialized to this situation provide a probabilistic derivation of the variable order diffusion equation. 

The results below also extends the analysis given in \cite{kurtz}, where asymptotic results on semi-Markov processes where discussed by only considering state dependent holding times with finite expectation. Our processes $M^c$ can have holding times having infinite mean and this turns out to be the most interesting situation.
In particular section \ref{seclimit} is devoted to the case in which the waiting times $J_i$ have infinite mean when this is obtained by requiring that the Bernstein functions $f(\lambda, x)$ defining the exponent $\Pi$ are regularly varying at zero. By giving to $c$ the meaning of a scale parameter the result in Theorem \ref{telimit} below provides a further probabilistic derivation of the (variable order) $\alpha(x)$ diffusion equation (together with other related results). This is done by means of a local scaling of the lattice size, in which the scaling factor is given by the regularly varying function $f(c, x)$ depending on the current position $x$: passing to the limit, this gives rise to the fractional operator of state dependent order.
\subsection{Limit of stepped semi-Markov processes}
\label{seclimfacile}
Here is the first general statement. The following theorem assumes the convergence of the generator $G^c$ to a generator $G^0$ and study the convergence of the corresponsing semi-Markov process $X^c(t)$ constructed as in Section \ref{sectimech}.
\begin{te}
\label{telimfacile}
Let $M^c$ be a family of Markov processes associated with the semigroups $P_t^c$ on the space $C_b \l \mathcal{S} \r$ having generators $G^c$ which can be written in the form $(G^cu)(\cdot):=\theta^c(\cdot) \int_S \l u(y)-u(\cdot) \r h^c(\cdot,dy)$. Let $G^0$ be a (dissipative) generator of a semigroup on the Banach space $C_0(\mathcal{S})$, where $C_0(\mathcal{S})$ the space of continuous functions vanishing at infinity, and assume that $G^cu \to G^0u$  for $u \in \text{Dom}(G^0)$. Let $X^c(t)$ be a family of stepped semi-Markov processes defined as in Section \ref{defsemi} each one of which satisfies the assumptions of Theorem \ref{tetimech} with respect to the Markov process $M^c$ and hence denote
\begin{align}
\l \Pi_t^c u \r (x) : = \mathds{E}^x u(X^c(t)) \, = \, \mathds{E}^x u \l M^c \l L^\Pi (t) \r \r.
\end{align}
Let $\mathpzc{R}^c_\lambda u:= \int_0^\infty e^{-\lambda t} \Pi_t^cu \,dt$. Then we have that the limit
\begin{align}
\mathpzc{R}^0_\lambda u : = \lim_{c \to 0} \mathpzc{R}_\lambda^cu
\end{align}
exists for any $u \in C_0(\mathcal{S})$ and $  \frac{\lambda}{f(\lambda, \cdot)}\l f(\lambda, \cdot)I - G^0 \r \mathpzc{R}_\lambda^0=  I$.
\end{te}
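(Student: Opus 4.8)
The plan is to reduce everything to the resolvent identity already obtained for the stepped processes and then pass to the limit in $c$. The starting point is equation \eqref{rearranged} in the proof of Theorem \ref{teclassiceq}, which holds for each fixed $c$: writing $\mathpzc{R}_\lambda^c u$ for the Laplace transform of $\Pi_t^c u$ and noting that the exponent $f(\lambda,\cdot)$ is the \emph{same} for every $c$ (since by the assumptions of Theorem \ref{tetimech} the time-change $L^\Pi$, hence the survival functions $\overline F_{x}$, do not depend on $c$; only the embedded Markov dynamics, encoded in $\theta^c$ and $h^c$, varies), one gets
\begin{align}
f(\lambda,\cdot)\,\mathpzc{R}_\lambda^c u \,-\,\lambda^{-1} f(\lambda,\cdot)\,u \,=\, G^c \mathpzc{R}_\lambda^c u .
\label{pp1}
\end{align}
Equivalently, multiplying by $\lambda/f(\lambda,\cdot)$,
\begin{align}
\lambda \mathpzc{R}_\lambda^c u - u \,=\, \frac{\lambda}{f(\lambda,\cdot)}\, G^c \mathpzc{R}_\lambda^c u ,
\label{pp2}
\end{align}
so that $\frac{\lambda}{f(\lambda,\cdot)}\bigl(f(\lambda,\cdot)I - G^c\bigr)\mathpzc{R}_\lambda^c = I$ for every $c$. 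The target identity is exactly this with $c=0$, so the whole content of the theorem is the existence of the limit $\mathpzc{R}_\lambda^0 u=\lim_{c\to 0}\mathpzc{R}_\lambda^c u$ together with the fact that it satisfies the limiting equation.

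First I would establish existence of the limit. From \eqref{pp1}, $\mathpzc{R}_\lambda^c = \bigl(f(\lambda,\cdot)I - G^c\bigr)^{-1}\bigl(\lambda^{-1} f(\lambda,\cdot)u\bigr)$; since $G^c$ is the generator of a sub-Markovian (contraction) semigroup and $f(\lambda,\cdot)>0$ with $\inf_{\cdot} f(\lambda,\cdot)>0$ for $\lambda>0$ (recall $f$ is an unbounded Bernstein function with $f(\lambda,\cdot)\to\infty$, in particular bounded away from $0$ for fixed $\lambda>0$), the operator $f(\lambda,\cdot)I-G^c$ is invertible with $\|(f(\lambda,\cdot)I-G^c)^{-1}\|\le 1/\inf_{\cdot}f(\lambda,\cdot)$ uniformly in $c$. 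Now I invoke the Trotter--Kato approximation theorem: the hypothesis $G^c u\to G^0 u$ for $u\in\mathrm{Dom}(G^0)$, together with $G^0$ being a (dissipative, densely defined) generator on $C_0(\mathcal S)$ and the uniform resolvent bounds just noted, gives convergence of the resolvents $(zI-G^c)^{-1}v\to (zI-G^0)^{-1}v$ for all $v\in C_0(\mathcal S)$ and $z>0$; applying this with the spectral parameter $z=f(\lambda,\cdot)$ — which here is a multiplication operator, but since $f(\lambda,\cdot)$ is a fixed bounded-below element of $C_b(\mathcal S)$ one can either treat it pointwise in the discrete-topology setting or absorb it into a standard perturbation argument — yields $\mathpzc{R}_\lambda^c u\to \mathpzc{R}_\lambda^0 u := \bigl(f(\lambda,\cdot)I-G^0\bigr)^{-1}\bigl(\lambda^{-1}f(\lambda,\cdot)u\bigr)$ for every $u\in C_0(\mathcal S)$.

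Having the limit, I pass to the limit in \eqref{pp2}. The left side converges to $\lambda\mathpzc{R}_\lambda^0 u - u$ by the previous step. For the right side, write $\frac{\lambda}{f(\lambda,\cdot)}G^c\mathpzc{R}_\lambda^c u = \frac{\lambda}{f(\lambda,\cdot)}\bigl(G^c\mathpzc{R}_\lambda^c u - G^0\mathpzc{R}_\lambda^0 u\bigr) + \frac{\lambda}{f(\lambda,\cdot)}G^0\mathpzc{R}_\lambda^0 u$; the first term tends to $0$ because $G^c\mathpzc{R}_\lambda^c u = f(\lambda,\cdot)\mathpzc{R}_\lambda^c u - \lambda^{-1}f(\lambda,\cdot)u \to f(\lambda,\cdot)\mathpzc{R}_\lambda^0 u - \lambda^{-1}f(\lambda,\cdot)u = G^0\mathpzc{R}_\lambda^0 u$ (using \eqref{pp1} for $c$ and the definition of $\mathpzc{R}_\lambda^0$), and multiplication by $\lambda/f(\lambda,\cdot)$ is a bounded operation. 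Hence in the limit $\lambda\mathpzc{R}_\lambda^0 u - u = \frac{\lambda}{f(\lambda,\cdot)}G^0\mathpzc{R}_\lambda^0 u$, which rearranges to $\frac{\lambda}{f(\lambda,\cdot)}\bigl(f(\lambda,\cdot)I - G^0\bigr)\mathpzc{R}_\lambda^0 = I$ as claimed.

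The main obstacle is the application of Trotter--Kato in the present non-standard setting: the "spectral parameter" is the multiplication operator by $f(\lambda,\cdot)$ rather than a scalar, the limiting semigroup lives on $C_0(\mathcal S)$ while the approximating ones are naturally defined on $C_b(\mathcal S)$, and one must be careful that $\mathrm{Dom}(G^0)$ is a core and that the range of $f(\lambda,\cdot)I-G^0$ is dense so that $\mathpzc{R}_\lambda^0$ is genuinely well defined. Once these functional-analytic points are checked (they are routine given that $f(\lambda,\cdot)$ is bounded and bounded away from zero for fixed $\lambda>0$, so it only shifts the resolvent set), the algebraic manipulations are exactly those already performed in Theorems \ref{teclassiceq} and \ref{teeqevol}.
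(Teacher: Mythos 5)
Your skeleton is the same as the paper's — start from \eqref{rearranged}, i.e. $\frac{\lambda}{f(\lambda,\cdot)}\left(f(\lambda,\cdot)I-G^c\right)\mathpzc{R}^c_\lambda u=u$, prove that $\mathpzc{R}^c_\lambda u$ converges, then pass to the limit in this identity — but your middle step is genuinely different. The paper never inverts $f(\lambda,\cdot)I-G^c$ and never appeals to Trotter--Kato: it shows directly that $\left(\mathpzc{R}^{1/n}_\lambda u\right)_n$ is a Cauchy sequence by setting $h_n:=\frac{\lambda}{f(\lambda,\cdot)}\left(f(\lambda,\cdot)I-G^{1/n}\right)w$ for $w\in \mathrm{Dom}(G^0)$, observing that $\mathpzc{R}^{1/n}_\lambda h_n=w$ for every $n$, using the three-term decomposition \eqref{decres} together with the uniform bound $\|\mathpzc{R}^{1/n}_\lambda\|\leq 1/\Re\lambda$ — which is free of charge because $\mathpzc{R}^c_\lambda$ is the Laplace transform of the contraction family $\Pi^c_t$ — and finally invoking the range condition $(\lambda-G^0)\mathrm{Dom}(G^0)=C_0(\mathcal{S})$ to cover all $u\in C_0(\mathcal{S})$; the limiting identity is then obtained by taking limits in $\frac{\lambda}{f(\lambda,\cdot)}\left(f(\lambda,\cdot)I-G^{1/n}\right)\mathpzc{R}^{1/n}_\lambda w=w$. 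Your packaging via resolvent convergence of the perturbed generators $G^c-f(\lambda,\cdot)I$ is attractive because it identifies $\mathpzc{R}^0_\lambda$ at once as $\left(f(\lambda,\cdot)I-G^0\right)^{-1}\left(\lambda^{-1}f(\lambda,\cdot)\,\cdot\,\right)$, but it purchases this at the price of extra quantitative hypotheses that the paper's argument does not need at this stage. (A small side remark: the survival functions $\overline{F}_x$ do depend on $c$, through $\theta^c$; what is $c$-independent, and what you actually use, is the exponent $f(\lambda,\cdot)$.)

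The concrete gap is in the step you yourself call the main obstacle and then declare routine. Your uniform estimate $\|(f(\lambda,\cdot)I-G^c)^{-1}\|\leq 1/\inf_x f(\lambda,x)$, and indeed the very definition of $\mathpzc{R}^0_\lambda u$ as $\left(f(\lambda,\cdot)I-G^0\right)^{-1}\left(\lambda^{-1}f(\lambda,\cdot)u\right)$ on all of $C_0(\mathcal{S})$, require $\inf_x f(\lambda,x)>0$ (and, for the bounded-perturbation route to Trotter--Kato and to reach the spectral value $z=0$, also $\sup_x f(\lambda,x)<\infty$). Your parenthetical justification — that $f(\cdot,x)$ is an unbounded Bernstein function, hence ``bounded away from $0$ for fixed $\lambda>0$'' — only gives $f(\lambda,x)>0$ for each fixed $x$; on an infinite state space $\inf_x f(\lambda,x)$ may well vanish, and no hypothesis of Theorem \ref{telimfacile} excludes this. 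The paper's quantitative input, $\|\mathpzc{R}^c_\lambda\|\leq 1/\Re\lambda$, holds with no such assumption (to be fair, some boundedness of $\lambda/f(\lambda,\cdot)$ is also used tacitly in the paper when multiplying by this factor, but it never appears as a uniform resolvent bound). If you are willing to assume $0<\inf_x f(\lambda,x)\leq\sup_x f(\lambda,x)<\infty$, your plan can be completed by applying the standard Trotter--Kato theorem to the dissipative operators $G^c-f(\lambda,\cdot)I$ (bounded perturbations, common core $\mathrm{Dom}(G^0)$, uniform growth bound $e^{-t\inf_x f(\lambda,x)}$ allowing evaluation of the resolvent at the origin); as written, however, that central step is asserted rather than proved, and the supporting claim for the uniform lower bound is incorrect.
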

\begin{proof}
First observe that by rearranging \eqref{rearranged} we have for any $u \in C_b \l \mathcal{S} \r$ that
\begin{align}
\frac{\lambda}{f(\lambda, \cdot)} \l f(\lambda, \cdot) - G^c \r  \mathpzc{R}_\lambda^cu \, = \, u.
\end{align}
We show that the elements $\mathpzc{R}_\lambda^{1/n}u$ for $n \in \mathbb{N}$ form a Cauchy sequence.
Let $h_n:= \frac{\lambda}{f(\lambda, \cdot)} \l f(\lambda, \cdot) I- G^{1/n} \r  w$ then we have that 
\begin{align}
h_n \to h= \frac{\lambda}{f(\lambda, \cdot)}  \l f(\lambda, \cdot) - G^0 \r  w.
\end{align}
Now note that
\begin{align}
\mathpzc{R}^{1/n}_\lambda h-\mathpzc{R}^{1/m}_\lambda h \, = \,  \mathpzc{R}^{1/m}_\lambda (h_m-h) + \mathpzc{R}^{1/n}_\lambda (h-h_n) + \l \mathpzc{R}^{1/n}_\lambda h_n - \mathpzc{R}^{1/m}_\lambda h_m \r.
\label{decres}
\end{align}
Since for any $n \in \mathbb{N}$ and $\lambda >0$ we have that $|| \mathpzc{R}_\lambda^{1/n}|| \leq 1/\Re\lambda$, the first two terms go to zero as $m,n \to \infty$ while the last term in \eqref{decres} is clearly zero. Therefore the limit exists for functions 
\begin{align}
u \in \frac{\lambda}{f(\lambda, \cdot)}  \l f(\lambda, \cdot) -G^0 \r  \text{Dom}(G^0) = \l \lambda - \frac{\lambda}{f(\lambda, \cdot)} G^0 \r \text{Dom}(G^0).
\end{align}
Since $G_0$ is a (dissipative) generator of a contraction semigroup we have (e.g. \cite[Thm 3.4.5]{abhn}) $\l \lambda -G^0 \r \text{Dom}(G^0) = C_0(\mathcal{S})$.
To conclude the proof observe that
\begin{align}
\frac{\lambda}{f(\lambda, \cdot)} \l f(\lambda, \cdot) - G^0 \r   \mathpzc{R}_\lambda^0 w \, = \, & \lim_{n \to \infty}  \frac{\lambda}{f(\lambda, \cdot)}  \l  f(\lambda, \cdot) - G^{1/n} \r  \mathpzc{R}_\lambda^{1/n} w \, = \, w.
\end{align}
\end{proof}
\begin{os} \normalfont
We remark that the limit $\mathpzc{R}_\lambda^0u$ obtained in Theorem \ref{telimit} satisfies, for $u \in \text{Dom}(G^0)$
\begin{align}
f(\lambda, \cdot)\mathpzc{R}_\lambda^0u- \lambda^{-1}f(\lambda, \cdot) u = G^0 \mathpzc{R}^c_\lambda u.
\label{inve}
\end{align}
Hence if one further assumes that the family $q^c(t):=\l \Pi_t^cu \r_{c \geq 0}$ has a limit then Theorem \ref{telimit} implies the convergence of $q^c(t)$ to the solution of the variable order equation 
\begin{align}
\mathcal{D}_t^{\bm{\cdot}}q^0(t) = G^0 q^0(t), \qquad q^0(0) = u \in \text{Dom}(G^0).
\end{align}
obtained by inverting ($\lambda \mapsto t$) the resolvent equation \eqref{inve}.
The equation \eqref{inve} plays the role of the Kolmogorov's equation for a limit process when such a process exists. This process indeed exists and still is a semi-Markov process, for example, when $X^c$ is a continuous time random walk with state space in $\mathbb{R}^d$ defined as in \cite[Section 2]{meerstra} and hence $c$ represents here a scale parameter. In this case the limit process is a semi-Markov process in the sense of Gihman and Skorohod (see Harlamov \cite[section 3.12, p. 76]{harlamov}), i.e., the limit process $X^0(t)$ is such that $\l X^0(t), \gamma^0(t) \r$ is a Markov process.
\end{os}
\subsubsection{Convergence to Brownian motion and L\'evy type processes} \label{convbm} Suppose that the embedded chain $X_n$ runs on $\mathbb{R}^d$ with transition probabilities given by
\begin{align}
h(x,dy) \, = \, \frac{1}{2d} \sum_{i=1}^d \l \delta_{x+e_i}(dy)+\delta_{x-e_i}(dy) \r.
\label{ciao}
\end{align}
Now if we set the transition probabilities of $X_n^c$ as in \eqref{ciao} with jump's height $c \geq 0$ we get
\begin{align}
h^c(x, dy) \, = \, \frac{1}{2d} \sum_{i=1}^d \l \delta_{x+ce_i}(dy)+\delta_{x-ce_i}(dy) \r.
\label{ciaoc}
\end{align}
Now suppose that the distribution of the waiting times $J_i$ is exponential with parameter $1/c^2$. By collecting all pieces together we obtain the corresponding Markov process $M^c(t)$ generated by
\begin{align}
\l G^cu \r (x) \, = \, & \frac{1}{2dc^2} \sum_{i=1}^d \l u(x+ce_i)+u(x-ce_i)-2u(x) \r.
 \end{align}
By letting $c \to 0$ we have, for functions $C^2 \l \mathbb{R}^d \r$, that $G^cu \to \frac{1}{2} \Delta u$ which is the generator of the strongly continuous heat semigroup on $C_0(\mathcal{S})$. Now suppose that the semi-Markov process $X^c(t)$ has the same embedded chain $X_n^c$ and has waiting times $J_i^c$ between jumps as in Theorem \ref{tetimech} having c.d.f. $\overline{F}_x(t) = \mathds{E}^z e^{-(1/c^2)L^{(x)}(t)}$ so that
Theorem \ref{telimfacile} holds and yields
\begin{align}
\int_0^\infty e^{-\lambda t} \mathds{E}^x u(X^c (t)) dt \to \widetilde{g}^0(\lambda,x)
\end{align}
where $\widetilde{g}^0 (\lambda,x)$ is the Laplace transform of the solution to
\begin{align}
\mathfrak{D}_t (x) g^0(t,x) \, = \, \frac{1}{2} \Delta g^0(t, x),
\end{align}
where
\begin{align}
\mathfrak{D}_t (x) g^0(t,x) \, = \, \frac{d}{dt} \int_0^t g^0(s, x) \, \bar{\nu}(t-s,x) \, ds \, - \, \bar{\nu}(t,x) g^0(0,x).
\end{align}

If instead the transition probabilities of the chain $X_n^c$ are
\begin{align}
h^c(x, d(y-x)) \, = \, \frac{\nu(x, d(y-x))}{\nu(x, \mathbb{R}^d \backslash B_c(0) )}  \mathds{1}_{ \left[ \mathbb{R}^d \backslash B_c(0) \right]}(y-x)
\end{align}
where $B_c(0) = \ll x \in \mathbb{R}^d : |x| \leq c \rr$ and $\mathcal{B}\l \mathbb{R}^d \r \ni B \mapsto \nu(x, B)$ is a $\sigma$-finite measure on $\mathbb{R}^d$ such that for any $x$
\begin{align}
\int_{\mathbb{R}^d} \l |y-x|^2 \wedge 1 \r \nu(x, d(y-x)) < \infty
\end{align}
and if the parameter of the exponential r.v. is $\theta^c(x) = \nu(x, \mathbb{R}^d \backslash B_c(0) )$ we obtain a Markov process $M^c(t)$ generated by
\begin{align}
\l G^cu \r (x) \, = \, \int_{\mathbb{R}^d\backslash B_c(0)} \l u(y) -u(x) \r \nu(x, d(y-x))
\end{align}
and by letting $c \to 0$ we obtain the L\'evy type generator \cite[formula (8.51), p. 366]{kolokoltsov}
\begin{align}
\l G^0 u \r (x) \, = \,  \int_{\mathbb{R}^d} \l u(y) -u(x) \r \nu(x, d(y-x)).
\end{align}
having domain on a subset of $\mathcal{C}_b(\mathcal{S})$.
Hence by using again Theorem \ref{telimfacile} we have that a semi-Markov process with the same embedded chain and whose waiting times $J_i^c$ have c.d.f.
\begin{align}
1-\overline{F}_x(t) \, = \, 1-\mathds{E}e^{-\theta^c(x) L^{(x)}(t)}
\end{align}
is such that
\begin{align}
\int_0^\infty e^{-\lambda t} \mathds{E}^xu(X^c(t))dt \to \widetilde{g}^0(\lambda,x)
\end{align}
where $\widetilde{g}^0(\lambda,x)$ is the Laplace transform of the solution to
\begin{align}
\mathfrak{D}_t(x) g(t, x) \, = \, \int_{\mathbb{R}^d} \l g(t,y) -g(t,x) \r \nu(x, d(y-x)).
\end{align}

\subsection{Convergence to the fractional equation}
\label{seclimit}
In this section we focus on the situation in which the waiting times $J_i$ have infinite mean. This is obtained here by assuming that the Bernstein functions $f(\lambda, x)$ defining the exponent $\Pi$ are regularly varying at zero with index $\alpha(x) \in (0,1)$. We recall that a function $f(\lambda)$ vary regularly at zero if $g(\lambda):=f(1/\lambda)$ vary regularly at infinity, i.e., 
\begin{align}
\lim_{\lambda \to \infty} \frac{g(c\lambda)}{g(\lambda)} = c^\alpha, \qquad \alpha  \geq 0.
\end{align}
By Karamata's characterization Theorem then we know that every reguarly varying function can be written in form $g(\lambda) = \lambda^\alpha L(\lambda)$ where $L(\lambda)$ is a slowly varying function, hence $L$ is such that
\begin{align}
\lim_{\lambda \to \infty} \frac{L(c\lambda)}{L(\lambda)} = 1.
\end{align}
This assumption on the Bernstein function $f(\lambda, x)$ implies by \cite[formula (2.32)]{meertoa} that
\begin{align}
\int_0^\infty \overline{F}_x(t) \, dt \, = \, \int_0^\infty \mathds{E}^ze^{-\theta(x)L^{(x)}(t)} \, dt \, = \, \infty.
\end{align}
Hence the waiting times $J_i$ with c.d.f. $1-\overline{F}_x(t)$ are such that $\mathds{E}J_i = \infty$.
In the following theorem we assume again that the generator $G^c$ converges to a generator $G^0$. Under the additional assumption on regular variation described above we study the limit of the corresponding semi-Markov processes.
\begin{te}
\label{telimit}
Let $M^c$ be a family of Markov processes associated with the semigroups $P_{t}^c$ on the space $C_b \l \mathcal{S} \r$ having generators $G^cu$ which is given by $(G^cu)(\cdot):=\theta^c(\cdot) \int_S \l u(y)-u(\cdot) \r h^c(\cdot,dy)$. Let $G^0$ be a (dissipative) generator of a semigroup on the Banach space $C_0(\mathcal{S})$ equipped with the sup-norm and assume that $c^{-1}G^cu \to G^0u$ for $u \in \text{Dom}(G^0)$. Let $f(\lambda, x)$ be regularly varying at zero with index $\alpha(x) \in (0,1)$ for any $x \in \mathcal{S}$. Now denote $P_t^{f(c,\cdot)}$ the semigroup generated by $G^{f(c,\cdot)}u:= \theta^{f(c, \cdot)}(\cdot)\int_{\mathcal{S}}\l u(y) - u(\cdot) \r h^{f(c, \cdot)}(\cdot,dy)$ and denote the corresponding Markov process as $M^{f(c,\cdot)}$. Let $\l X^c(t) \r_{c \geq 0}$ be a family of stepped semi-Markov processes defined as in Section \ref{defsemi} each one of which satisfies the assumptions of Theorem \ref{tetimech} with respect to the Markov process $M^{f(c,x)}$ and hence denote 
\begin{align}
\l \Pi_t^cu \r (x) := \mathds{E}^x u\l X^c(t) \r \, = \, \mathds{E}^xu \l M^{f(c,x)} \l L^\Pi (t) \r \r.
\end{align}
Let $\mathpzc{R}_\lambda^cu:=\int_0^\infty e^{-\lambda t} \Pi_{t/c}u\, dt$. Then we have that the limit
\begin{align}
\mathpzc{R}^0_\lambda u:=\lim_{c \to 0}\mathpzc{R}^{f(c, \cdot)}_\lambda u 
\end{align}
exists for any $u \in C_0(\mathcal{S})$ and $ \lambda^{1-\alpha(\cdot)}\l \lambda^{\alpha(\cdot)}I-G^0 \r\mathpzc{R}_\lambda^0= I$.
\end{te}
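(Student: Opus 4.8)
The plan is to follow the scheme of the proof of Theorem~\ref{telimfacile}, the genuinely new ingredient being the regular variation of $\lambda\mapsto f(\lambda,x)$ at the origin; throughout, $f(c,x)$ denotes this Bernstein function evaluated at the small parameter $\lambda=c$. Write $\mathpzc{S}_\mu^c u:=\int_0^\infty e^{-\mu t}\Pi_t^c u\,dt$ for the ordinary Laplace transform of $t\mapsto\Pi_t^c u$. Since $\Pi_t^c$ is an averaging operator on bounded functions we have $\|\Pi_t^c\|\le 1$, hence $\|\mathpzc{S}_\mu^c\|\le 1/\Re\mu$; the substitution $t\mapsto t/c$ moreover gives $\mathpzc{R}_\lambda^c u=c\,\mathpzc{S}_{c\lambda}^c u$, and therefore the uniform bound $\|\mathpzc{R}_\lambda^c\|\le 1/\Re\lambda$ for all $c>0$ (here $\mathpzc{R}_\lambda^c$ is the object written $\mathpzc{R}_\lambda^{f(c,\cdot)}$ in the statement, the superscript merely recording that $X^c$ sits over $M^{f(c,\cdot)}$). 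First I would apply the resolvent identity \eqref{rearranged} to the stepped semi-Markov process $X^c$ --- whose underlying Markov process is $M^{f(c,\cdot)}$, with bounded generator $G^{f(c,\cdot)}$, and whose time change $L^\Pi$ is built from increments with Laplace exponents $f(\lambda,\cdot)$ --- to obtain $f(\mu,\cdot)\,\mathpzc{S}_\mu^c u-\mu^{-1}f(\mu,\cdot)\,u=G^{f(c,\cdot)}\mathpzc{S}_\mu^c u$; specialising $\mu=c\lambda$ and multiplying by $c$ yields
\begin{align}
f(c\lambda,\cdot)\,\mathpzc{R}_\lambda^c u-\lambda^{-1}f(c\lambda,\cdot)\,u\,=\,G^{f(c,\cdot)}\mathpzc{R}_\lambda^c u ,
\label{pp-one}
\end{align}
and, dividing by $f(c,\cdot)$,
\begin{align}
\l\frac{f(c\lambda,\cdot)}{f(c,\cdot)}I-\frac{1}{f(c,\cdot)}G^{f(c,\cdot)}\r\mathpzc{R}_\lambda^c u\,=\,\lambda^{-1}\,\frac{f(c\lambda,\cdot)}{f(c,\cdot)}\,u .
\label{pp-two}
\end{align}

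Next I would let $c\to 0$ in \eqref{pp-two}. Regular variation of index $\alpha(x)$ gives $f(c\lambda,x)/f(c,x)\to\lambda^{\alpha(x)}$, and the hypothesis $\varepsilon^{-1}G^\varepsilon\to G^0$, read along $\varepsilon=f(c,\cdot)\to 0$, gives $f(c,\cdot)^{-1}G^{f(c,\cdot)}w\to G^0 w$ for $w\in\mathrm{Dom}(G^0)$. To turn this into genuine convergence of $\mathpzc{R}_\lambda^c u$ I would reuse the Cauchy-sequence device of Theorem~\ref{telimfacile}: fix $w\in\mathrm{Dom}(G^0)$ and put
\begin{align}
u_c\,:=\,\lambda w-\lambda\,f(c\lambda,\cdot)^{-1}G^{f(c,\cdot)}w .
\label{pp-uc}
\end{align}
The operator on the left of \eqref{pp-two} equals $\tfrac{f(c\lambda,\cdot)}{f(c,\cdot)}I$ minus a Markovian jump generator and is therefore invertible, so \eqref{pp-two} at $u=u_c$ reads $\mathpzc{R}_\lambda^c u_c=w$ for every $c$; on the other hand, rewriting $f(c\lambda,\cdot)^{-1}G^{f(c,\cdot)}w=\tfrac{f(c,\cdot)}{f(c\lambda,\cdot)}\cdot\tfrac1{f(c,\cdot)}G^{f(c,\cdot)}w$ and using the two limits above, $u_c\to u_0:=\lambda w-\lambda^{1-\alpha(\cdot)}G^0 w=\lambda^{1-\alpha(\cdot)}\l\lambda^{\alpha(\cdot)}I-G^0\r w$. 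Then for any $c_n\to 0$ the decomposition
\begin{align}
\mathpzc{R}_\lambda^{c_n}u_0-\mathpzc{R}_\lambda^{c_m}u_0\,=\,\mathpzc{R}_\lambda^{c_m}(u_{c_m}-u_0)+\mathpzc{R}_\lambda^{c_n}(u_0-u_{c_n})+\l\mathpzc{R}_\lambda^{c_n}u_{c_n}-\mathpzc{R}_\lambda^{c_m}u_{c_m}\r
\label{pp-dec}
\end{align}
has vanishing last bracket, while the first two terms are at most $\|u_{c_m}-u_0\|/\Re\lambda$ and $\|u_0-u_{c_n}\|/\Re\lambda$; hence $\mathpzc{R}_\lambda^c u_0$ is Cauchy, the limit $\mathpzc{R}_\lambda^0 u_0$ exists and equals $w$, and $\lambda^{1-\alpha(\cdot)}\l\lambda^{\alpha(\cdot)}I-G^0\r\mathpzc{R}_\lambda^0 u_0=u_0$.

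This produces the conclusion for every $u$ in the range $\mathcal{M}:=\lambda^{1-\alpha(\cdot)}\l\lambda^{\alpha(\cdot)}I-G^0\r\mathrm{Dom}(G^0)$, so it remains to see that $\mathcal{M}=C_0(\mathcal{S})$. Since $\alpha(x)\in(0,1)$, the functions $x\mapsto\lambda^{\alpha(x)}$ and $x\mapsto\lambda^{1-\alpha(x)}$ take values in a compact subset of $(0,\infty)$, so multiplication by either is a bounded, boundedly invertible operator on $C_0(\mathcal{S})$, and it suffices that $\l\lambda^{\alpha(\cdot)}I-G^0\r\mathrm{Dom}(G^0)=C_0(\mathcal{S})$; as in Theorem~\ref{telimfacile} this follows from dissipativity of $G^0$, writing $\lambda^{\alpha(\cdot)}I-G^0=\l\inf_x\lambda^{\alpha(x)}\r I-\l G^0-\l\lambda^{\alpha(\cdot)}-\inf_x\lambda^{\alpha(x)}\r I\r$ as a strictly positive multiple of $I$ minus the generator of a killed version of the semigroup of $G^0$. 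Combined with the bound $\|\mathpzc{R}_\lambda^c\|\le 1/\Re\lambda$ --- which makes $\mathpzc{R}_\lambda^0$ a bounded operator --- this delivers the limit for all $u\in C_0(\mathcal{S})$ and the operator identity $\lambda^{1-\alpha(\cdot)}\l\lambda^{\alpha(\cdot)}I-G^0\r\mathpzc{R}_\lambda^0=I$.

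The step I expect to be the main obstacle is the \emph{uniformity in the state variable} $x\in\mathcal{S}$: both limits $f(c\lambda,x)/f(c,x)\to\lambda^{\alpha(x)}$ and $f(c,x)^{-1}G^{f(c,\cdot)}w(x)\to G^0w(x)$ must hold in the sup-norm of $C_0(\mathcal{S})$, not merely pointwise in $x$, since \eqref{pp-uc} and \eqref{pp-dec} are statements in $C_0(\mathcal{S})$. This is precisely where the uniform convergence theorem for regularly varying functions (on a neighbourhood of the fixed $\lambda$) and the sup-norm form of $\varepsilon^{-1}G^\varepsilon w\to G^0w$ come in; one also needs $\{\alpha(x)\}$ bounded away from $0$ and $1$ and the ratios $f(c\lambda,x)/f(c,x)$ bounded away from $0$ uniformly in $x$ for the small $c$ at issue, so that the multiplication operators and inverses used above are uniformly controlled.
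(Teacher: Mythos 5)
Your proof follows essentially the same route as the paper's: the rescaled resolvent identity obtained from \eqref{rearranged} for $X^c$ over $M^{f(c,\cdot)}$, the regular-variation limit $f(c\lambda,\cdot)/f(c,\cdot)\to\lambda^{\alpha(\cdot)}$ combined with $f(c,\cdot)^{-1}G^{f(c,\cdot)}w\to G^0w$, the Cauchy-sequence device based on the uniform bound $\|\mathpzc{R}_\lambda^c\|\le 1/\Re\lambda$, and the dissipativity/range argument to cover all of $C_0(\mathcal{S})$. You are in fact a bit more explicit than the paper at two points it leaves implicit (the injectivity/invertibility needed to conclude $\mathpzc{R}_\lambda^c u_c=w$, and the surjectivity of the variable-coefficient operator $\lambda^{\alpha(\cdot)}I-G^0$ via the killed-semigroup rewriting), and the uniformity-in-$x$ issue you flag is likewise left implicit in the paper, so it is not a gap relative to its argument.
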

\begin{proof}
First observe that
\begin{align}
\mathpzc{R}_\lambda^c u \, = \, c\int_0^\infty e^{-\lambda ct} \Pi_tu \, dt
\end{align}
and hence as in the proof of theorem \ref{telimfacile} we find, by rearranging \eqref{rearranged}, that for any $u \in C_b \l \mathcal{S} \r$
\begin{align}
\frac{\lambda}{f(c\lambda, \cdot)} \l f(c\lambda, \cdot) - G^{f(c, \cdot)} \r  \mathpzc{R}_\lambda^cu \, = \, u.
\end{align}
Let $w \in \text{Dom}(G^0)$ and use the fact that $f(\lambda, x)$ is regularly varying at zero to say that $f(\lambda, x) = \lambda^{\alpha(x)}L(\lambda, x)$ where $\lambda \mapsto L(\lambda, x)$ is slowly varying. Hence, as $c \to 0$,
\begin{align}
\lim_{c \to 0}\frac{\lambda}{f(c\lambda, \cdot)} \l f(c\lambda, \cdot) - G^{f(c, \cdot)} \r  w \, = \, &\lim_{c \to 0} \frac{\lambda^{1-\alpha(\cdot)}}{c^{\alpha (\cdot)}L(\lambda c, \cdot)} \l (\lambda c)^{\alpha(\cdot)}L(\lambda c, \cdot)-G^{f(c, \cdot)} \r w \notag \\
= \,&\lim_{c \to 0} \frac{\lambda^{1-\alpha(\cdot)}}{f(c, \cdot)} \l (\lambda )^{\alpha(\cdot)}f( c, \cdot)-G^{f(c, \cdot)} \r w \notag \\
= \, &\lambda^{1-\alpha(\cdot)} \lim_{c \to 0} \l \lambda^{\alpha (\cdot)} - \frac{1}{f(c, \cdot)} G^{f(c, \cdot)} \r w \notag \\
= \, & \lambda^{1-\alpha(\cdot)}  \l \lambda^{\alpha (\cdot)} - G^0 \r w.
\label{525}
\end{align}
The elements $\mathpzc{R}_\lambda^{1/n}u$ for $n \in \mathbb{N}$ form a Cauchy sequence. This can be proved as in the proof of Theorem \ref{telimfacile}.
Let $h_n:= \frac{\lambda}{f(\lambda/n, \cdot)} \l f(\lambda/n, \cdot) I- G^{f \l 1/n, \cdot \r} \r  w$ then we have by \eqref{525}
\begin{align}
h_n \to h= \lambda^{1-\alpha(\cdot)}\l \lambda^{\alpha (\cdot)} - G^0 \r w.
\end{align}
Now note that
\begin{align}
\mathpzc{R}^{1/n}_\lambda h-\mathpzc{R}^{1/m}_\lambda h \, = \,  \mathpzc{R}^{1/m}_\lambda (h_m-h) + \mathpzc{R}^{1/n}_\lambda (h-h_n) + \l \mathpzc{R}^{1/n}_\lambda h_n - \mathpzc{R}^{1/m}_\lambda h_m \r.
\label{527}
\end{align}
Since for any $n \in \mathbb{N}$ and $\lambda >0$ we have that $|| \mathpzc{R}_\lambda^{1/n}|| \leq 1/\Re\lambda$, the first two terms go to zero as $m,n \to \infty$ while the last term in \eqref{527} is clearly zero. Therefore the limit exists for functions 
\begin{align}
u \in \lambda^{1-\alpha(\cdot)}\l \lambda^{\alpha (\cdot)} - G^0 \r  \text{Dom}(G^0) = \l \lambda - \lambda^{1-\alpha(\cdot)} G^0 \r \text{Dom}(G^0).
\end{align}
Since $G_0$ is dissipative we have (e.g. \cite[Corollary 3.4.6]{abhn}) $\l \lambda -G^0 \r \text{Dom}(G^0) = \mathbb{C} \l \mathcal{S} \r$.
To conclude the proof observe that
\begin{align}
\lambda^{1-\alpha(\cdot)} \l \lambda^{\alpha (\cdot)} - G^0 \r   \mathpzc{R}_\lambda^0 w \, = \, & \lim_{n \to \infty}  \frac{\lambda}{f(\lambda/n,\cdot)}  \l  f(\lambda/n, \cdot) - G^{f\l 1/n, \cdot \r} \r  \mathpzc{R}_\lambda^{1/n} w \, = \, w.
\end{align}
\end{proof}
\begin{os} \normalfont
Note that the limit $\mathpzc{R}_\lambda^0u$ obtained in Theorem \ref{telimit} satisfies, for $u \in \text{Dom}(G^0)$
\begin{align}
\lambda^{\alpha (\cdot)}\mathpzc{R}_\lambda^0u- \lambda^{\alpha (\cdot)-1} u = G^0 \mathpzc{R}^0_\lambda u.
\label{inverfrac}
\end{align}
By inverting Laplace transform in \eqref{inverfrac} $(\lambda \mapsto t)$ we get the variable order fractional equation
\begin{align}
\frac{d^{\alpha(\cdot)}}{dt^{\alpha(\cdot)}} q^0(t) = G^0 q^0(t), \qquad q^0(t) = u.
\label{fracvareq}
\end{align}
Hence if we additionally assume that the family $q^c(t):=\l \Pi_t^cu \r_{c \geq 0}$ has a limit then Theorem \ref{telimit} implies the convergence of $q^c(t)$ to the solution of the fractional variable order equation \eqref{fracvareq}.

As stated in \cite{kurtz} the asymptotic behaviour of a semi-Markov process is Markovian when the holding times have finite mean. Here the limit process cannot be Markovian since it is governed by a fractional equation. Hence the asymptotic Markovian behaviour is lost due to such heavy tailed waiting times.
\end{os}

\subsubsection{The limit of the Poisson process and the inverse stable subordinator}
Consider the strongly continuous Poisson semigroup on the space $C_b \l \mathbb{R}^d \r$ defined as
\begin{align}
(P_t^c u)(x) \, = \, \sum_{j=0}^\infty u(x+lj) \frac{(\theta t)^j}{j!}e^{-\theta t}
\end{align}
for $l \in \mathbb{R}^d$ with $|l|=1$. This correspond to the Poisson process, say $N^l(t)$, with intensity $\theta$ and jump height $l \in \mathbb{R}^d$ introduced in Example \ref{expoieq}. Now let $P_t^c$ be the family of Poisson semigroups
\begin{align}
(P_t^cu)(x) \, = \, \sum_{j=0}^\infty u(x+clj) \frac{(\theta t)^j}{j!}e^{-\theta t}
\end{align}
which corresponds to the Poisson process $N^{cl}$ with jump height $cl$, $l \in \mathbb{R}^d$, $c >0$ and consider the operator $P_{t/c}^c$ which is still a Poisson semigroup and has generator
\begin{align}
(G^c u)(x) \, : = \, \theta \frac{u(x+cl)-u(x)}{c}.
\end{align}
Hence by letting $c \to 0$ we have that
\begin{align}
G^c u \to  G^0 u \, = \, \theta \nabla_l u, \qquad u \in C^1_l(\mathbb{R}^d).
\end{align}
Let $X_n^c$ be a Markov chain with transition probabilities $h^{f(c,x)}(x,dy) = \delta_{x+f(c,x)l}$ and consider the semigroup $P_t^{f(c,x)}$ generated by
\begin{align}
(G^{f(c,x)}u)(x) \, = \, \theta \l u(x+f(c,x)l)-u(x)\r
\end{align}
and let $N^{f(c,x)l}$ be the associated Markov chain. Now let $\mathpzc{J}_i$ be i.i.d. independent r.v.'s with distribution
\begin{align}
\overline{F}_y(t) \, = \, P^x \l \mathpzc{J}_i >t \mid X_i = y, X_{i+1} = z \r
\end{align}
and define
\begin{align}
\mathpzc{N}^c(t) \, = \, X_n^c, \qquad \sum_{i=0}^n \mathpzc{J}_i \leq t < \sum_{i=0}^{n+1} \mathpzc{J}_i,
\end{align}
and denote
\begin{align}
(\Pi_t^cu)(x) \, := \, \mathds{E}^x u \l \mathpzc{N}^c(t) \r \, = \, \mathds{E}^x u \l N^{f(c,x)l} \l L^\Pi (t) \r \r
\end{align}
and assume that $\widetilde{\overline{F}}_x(\lambda)$ vary regularly at zero with index $\alpha(x)-1$ for $\alpha(x) \in (0,1)$, for any $x \in \mathbb{R}^d$. Use this and \eqref{easyto} to check that $\lambda \mapsto f(\lambda, x)$ vary regularly at zero with index $\alpha(x) \in (0,1)$ for any $x \in \mathcal{S}$. 
Now apply Theorem \ref{telimit} to say that for any $x \in \mathcal{S}$,
\begin{align}
\int_0^\infty e^{-\lambda t} (\Pi_{t/c}^c u)(x) \, dt \, = \, \int_0^\infty e^{-\lambda t} \, \mathds{E}^x u \l \mathpzc{N}^c(t/c) \r \, dt \, \stackrel{c \to 0}{\longrightarrow} \,  \widetilde{q}^0 (x,\lambda)
\label{526}
\end{align}
where $\widetilde{q}^0 (\lambda)$ is the Laplace transform of the solution to
\begin{align}
\frac{d^{\alpha(\cdot)}}{dt^{\alpha(\cdot)}} q^0(t) \, = \, \theta \nabla_l q^0(t), \qquad q^0(0) = u \in C^1_l (\mathbb{R}^d).
\label{fractrasl}
\end{align}

\subsubsection{Convergence to the fractional diffusion equation and to L\'evy type processes}
Let $X_n$ be the embedded chain with transition probabilities given in \eqref{ciaoc}. Assume that the waiting times of the Markov process $M^c(t)$ having $X_n$ as embedded chain have exponential distribution with parameter $\theta^c(x)=1/c$. Hence the Markov process $M^c(t)$ has generator
\begin{align}
\l G^cu \r (x) \, = \, & \frac{1}{2dc^2} \sum_{i=1}^d \l u(x+ce_i)+u(x-ce_i)-2u(x) \r.
\end{align}
Hence we have as in Section \ref{convbm} that $G^cu \to (1/2)\Delta u$ as $ c \to 0$ for $u \in C^2(\mathbb{R}^d)$ which is the generator of the strongly continuous heat semigroup on $C_0(\mathcal{S})$. Now consider the process $M^{f(c,\cdot)}(t)$ which is obtained by setting the transition probabilities of the embedded chain $X_n$
\begin{align}
h^{f(c,x)}(x, dy) \, = \, \frac{1}{2d} \sum_{i=1}^d \l \delta_{x+f(c,x)e_i}(dy) -  \delta_{x-f(c,x)e_i}(dy) \r
\end{align}
and the paramaters of the exponential r.v.'s equal to $\theta^c(x)=1/f(c,x)$. Assume that $f(\lambda, x)$ varies regularly at zero with index $\alpha (x) \in (0,1)$ and define $X^c(t)$ as a semi-Markov process which satisfies the condition in Theorem \ref{tetimech}. Hence we can apply Theorem \ref{telimit} to say that
\begin{align}
\int_0^\infty e^{-\lambda t} \mathds{E}^xu(X^c(t/c)) dt \to \widetilde{g}^0 (\lambda, x)
\end{align}
where $\widetilde{g}^0(\lambda)$ is the Laplace transform of the solution to
\begin{align}
\frac{d^{\alpha (\cdot)}}{dt^{\alpha(\cdot)}}g^0(t) \, = \, \frac{1}{2} \Delta g^0(t), \qquad g^0(0) = u \in C^2 \l \mathcal{S} \r.
\end{align}

Consider again the transition probabilities of the chain $X_n^c$ as in Section \ref{convbm} given by
\begin{align}
h^c(x, d(y-x)) \, = \, \frac{\nu(x, d(y-x))}{\nu(x, \mathbb{R}^d \backslash B_c(0) )}  \mathds{1}_{ \left[ \mathbb{R}^d \backslash B_c(0) \right]}(y-x)
\end{align}
which yields to a Markov process $M^c(t)$ with semigroup $P_t^c$ converging to the L\'evy type semigroup generated by 
\begin{align}
\l G^0 u \r (x) \, = \,  \int_{\mathbb{R}^d} \l u(y) -u(x) \r \nu(x, d(y-x)).
\end{align}
In order to apply Theorem \ref{telimit} we need to consider the Markov process $M^{f(c,x)}(t)$ having transition probabilities
\begin{align}
h^{f(c,x)}(x, d(y-x)) \, = \, \frac{\nu(x, d(y-x))}{\nu\l x, \mathbb{R}^d \backslash B_{f(c,x)}(0) \r}  \mathds{1}_{ \left[ \mathbb{R}^d \backslash B_{f(c,x)}(0) \right]}(y-x)
\end{align}
where $B_{f(c,x)}(0) = \ll y \in \mathbb{R}^d : |y| \leq f(c,x), x \in \mathbb{R}^d \rr$. Now let $X^c(t)$ be a family of semi-Markov process satisfying the hypothesys in Theorem \ref{tetimech} with respect to $M^{f(c,x)}(t)$ and use Theorem \ref{telimit} to say that
\begin{align}
\int_0^\infty e^{-\lambda t} \mathds{E}^xu(X^c(t)) dt \, \to \, \widetilde{g}^0(\lambda,x) 
\end{align}
as $c \to 0$, where $\widetilde{g}^0(\lambda,x)$ is the Laplace transform of $g^0(t,x)$ which satisfies
\begin{align}
\frac{d^{\alpha(x)}}{dt^{\alpha(x)}} g(t, x) \, = \, \int_{\mathbb{R}^d}  \l g(t,y) -g(t,x) \r \nu(x, d(y-x)).
\end{align}

\section{Some remarks on countable state spaces}
For the sake of intuition we present here some results of previous sections when the state space $\mathcal{S}$ is countable. This situation is often useful in applications (e.g. \cite{jannsen} and also \cite{G15, raberto} for some recent examples). The results here are a consequence of the ones obtained above and hence we present here only the points in which the discussion becomes easier. Since the state space is countable we define the transition matrix $\l P_t \r_{ij} := P^i \l M(t) = j \r$, which is a semigroup of linear operators acting on vectors $\underline{u}$ generated by
\begin{align}
G \,: = \, \Theta \l H - 1 \r,
\label{genM}
\end{align}
where $\Theta = \text{diag}(\theta_1, \theta_2, \cdots)$ and $H$ is the transition matrix of the embedded Markov chain $X_n$. 
Hence we have that
\begin{align}
\frac{d}{dt} P_t \underline{u} = G P_t\underline{u} = P_t G\underline{u}.
\label{keq}
\end{align} 
Consider now the matrix $\Pi_t$ given by
\begin{align}
\pi_{i,j}(t) \, :=  \l \Pi_t \r_{ij} :=\,  P^i \l X(t) = j \mid \gamma^X(0) = 0  \r
\end{align}
which is equivalent, for any $x \in \mathcal{S}$, to
\begin{align}
\l \Pi_t \r_{ij} = P^x \l X(t+\tau) = j \mid X(\tau) = i, \gamma^X(\tau) = 0  \r
\end{align}
in view of homogeneity \eqref{homo}. The backward equation has the following form.
\begin{coro}
\label{coroclasseq}
Let $\l \Pi_t \r_{ij} = \pi_{i,j}= P^i \l X(t) = j \r$ and consider as initial datum a vector $\underline{u} = \left[ u(1), u(2), \dots, u(i), \dots \right]^\prime$, $i=1, 2, \dots$ where $u$ is $C_b \l \mathcal{S} \r$ and satisfies the assumptions in Proposition \ref{tediff}. The mapping $ t \mapsto \Pi_tu$ solves, for $t \geq 0$,
\begin{align}
\mathfrak{D}_t^\cdot q(t) \, = \, Gq(t), \qquad q(0) = \underline{u}.
\label{classiceq}
\end{align}
\end{coro}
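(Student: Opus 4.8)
The plan is to derive Corollary \ref{coroclasseq} as a direct specialization of Theorem \ref{teclassiceq} to the case of a countable state space $\mathcal{S}$. First I would observe that on a countable $\mathcal{S}$ equipped with the discrete topology, $C_b(\mathcal{S})$ coincides with the space of bounded sequences, and the operator $G$ in \eqref{defG} is exactly the matrix operator $G = \Theta(H-1)$ of \eqref{genM} acting on bounded vectors $\underline{u}$; this follows by writing out $(Gu)(i) = \theta_i\sum_j (u(j)-u(i)) h_{ij} = \theta_i (H\underline{u} - \underline{u})_i$. Similarly, the semigroup $P_t$ in matrix form $(P_t)_{ij} = P^i(M(t)=j)$ agrees with the Markov semigroup $(P_t u)(i) = \mathds{E}^i u(M(t))$, and $\Pi_t$ in matrix form agrees with the operator $\Pi_t^0$ applied componentwise. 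Hence $q(t) := \Pi_t \underline{u}$ is exactly the mapping $t\mapsto (\Pi_t u)(\cdot)$ of Theorem \ref{teclassiceq} read as a vector-valued function.

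Next I would check that all hypotheses of Theorem \ref{teclassiceq} transfer. The initial datum $u$ is assumed to lie in $C_b(\mathcal{S})$ and to satisfy the conditions in Proposition \ref{tediff}, which is precisely what Theorem \ref{teclassiceq} requires of $u$; the standing assumptions of that theorem — namely $\overline{F}_x(t) = \mathds{E}^y e^{-\theta(x) L^x(t)}$ for suitable Bernstein functions $f(\lambda,x)$ with $\nu((0,\infty),x)=\infty$ and $s\mapsto \bar\nu(s,x)$ continuous — are inherited as standing assumptions in this section (and indeed, by Theorem \ref{tecm}, hold whenever $t\mapsto\overline{F}_x(t)$ is completely monotone with the appropriate mass conditions, which is the natural setting on a countable state space). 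Under these hypotheses Theorem \ref{teclassiceq} gives that $t\mapsto q(t)$ is $C^1([0,\infty),C_b(\mathcal{S}))$ and solves $\mathfrak{D}_t^{\bm{\cdot}} q(t) = G q(t)$ with $q(0)=u$, where the operator $\mathfrak{D}_t^{\bm{\cdot}}$ acts componentwise as
\begin{align}
\l \mathfrak{D}_t^{\bm{\cdot}} q(t) \r_i \, = \, \frac{d}{dt}\int_0^t q_i(s)\,\bar\nu(t-s,i)\,ds \, - \, \bar\nu(t,i)\,q_i(0).
\end{align}
Rewriting this componentwise identity as a vector equation $\mathfrak{D}_t^{\cdot} q(t) = G q(t)$ with $q(0)=\underline{u}$ is exactly \eqref{classiceq}, and the proof is complete.

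I do not expect a genuine obstacle here: the corollary is a translation of an already-proved theorem into matrix notation, so the only real content is verifying that the abstract Banach-space objects ($C_b(\mathcal{S})$, the generator $G$, the semigroup $P_t$, the operator $\Pi_t$) reduce to their concrete matrix/vector counterparts on a countable $\mathcal{S}$. The mildly delicate point, if any, is making sure the componentwise reading of the operator $\mathfrak{D}_t^{\bm{\cdot}}$ and of the differentiation $\frac{d}{dt}$ is legitimate — i.e. that strong differentiability of $t\mapsto q(t)$ in $C_b(\mathcal{S})$ (which is sup-norm convergence) implies differentiability of each coordinate $t\mapsto q_i(t)$ with the derivative again a bounded sequence. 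This is immediate since sup-norm convergence dominates coordinatewise convergence, so I would simply note this and invoke Theorem \ref{teclassiceq}.
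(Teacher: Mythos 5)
Your proposal is correct and takes essentially the same route as the paper: the paper's proof opens by stating that the result is a corollary of Theorem \ref{teclassiceq}, which is exactly your specialization argument (identifying $G$ with $\Theta(H-1)$ and reading the Banach-space objects componentwise). The only difference is that the paper then adds an optional self-contained heuristic derivation in the countable setting, via the backward renewal equation of Cinlar, Laplace transforms of \eqref{410}, and inversion, which your argument does not need.
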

\begin{proof}
This is a Corollary of Theorem \ref{teclassiceq}. We provide here an heuristic proof. Use \cite[Chapter 10, formula (5.5)]{cinlar} to say that $\pi_{i,j}(t)$ satisfies the backward renewal equation
\begin{align}
\pi_{i,j}(t) \, = \, \overline{F}_{i}(t) \delta_{ij} \, + \, \sum_{k \in \mathcal{S}} \int_0^t h_{ik} \,  \mathpzc{f}_i (s) \, \pi_{k,j}(t-s) \,  ds
\label{410}
\end{align}
where $f$ is a density of $F$. Since $\overline{F}_i(t) = \mathds{E}^x e^{-\theta_iL^i(t)}$ we recall that
\begin{align}
\mathcal{L} \left[ \overline{F}_i(\cdot) \right] (\lambda)\, = \, \frac{f(\lambda, i)}{\lambda} \frac{1}{\theta_i+f(\lambda, i)}
\end{align}
and therefore by the convolution theorem for Laplace transform we can take the Laplace transform in \eqref{410} to write
\begin{align}
\widetilde{\pi}_{i,j}(\lambda) \, = \, \frac{f(\lambda, i)}{\lambda} \frac{1}{\theta_i+f(\lambda, i)} \delta_{ij}+ \sum_{k \in \mathcal{S}} \frac{\theta_i}{\theta_i+f(\lambda, i)} h_{ik}\widetilde{\pi}_{k,j}(\lambda).
\label{rearr}
\end{align}
By rearranging \eqref{rearr} we can write
\begin{align}
f(\lambda, i) \widetilde{\pi}_{i,j}(\lambda) - \lambda^{-1}f(\lambda, i) \pi_{i,j}(0) \, = \, -\theta_i \widetilde{\pi}_{i,j}(\lambda) + \sum_{k \in \mathcal{S}} \theta_i h_{ik} \widetilde{\pi}_{k,j}(\lambda).
\label{rearr2}
\end{align}
By using \cite[Proposition 2.7 and Lemma 2.5]{toaldopota} we can invert the Laplace transform and we have that
\begin{align}
\frac{d}{dt} \int_0^t \pi_{i,j} (s) \, \bar{\nu}(t-s,i) ds \, - \delta_{ij} \bar{\nu}(t,i) \, = \, -\theta_i \pi_{i,j}(t) + \sum_{k \in \mathcal{S}} \theta_i h_{ik} \pi_{k,j}(t).
\label{414}
\end{align}
Let $g_{ij}$ be the elements of the matrix $G$. By definition \eqref{genM} we have that
\begin{align}
g_{ij} \, = \, \theta_i \l h_{ij}-\delta_{ij} \r
\label{elg}
\end{align}
and hence \eqref{414} reduces to
\begin{align}
\frac{d}{dt} \int_0^t \pi_{i,j} (s) \, \bar{\nu}(t-s,i) ds \,- \delta_{ij} \bar{\nu}(t,i) \, = \, \sum_{k \in \mathcal{S}} g_{ik} \, \pi_{k,j}(t).
\end{align}
\end{proof}
The version of the equation in Theorem \ref{teeqevol} in a countable space is given in the following Corollary.
\begin{coro}
Under the same assumptions of Corollary \eqref{coroclasseq} we have that $q(t)$ satisfies the evolutionary Cauchy problem
\begin{align}
\frac{d}{dt} q(t) \, = \, \mathcal{D}_t^\star  \, G \, q(t), \qquad q(0) = u.
\end{align}
\end{coro}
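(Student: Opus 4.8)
The plan is to obtain this as the countable-state specialization of Theorem~\ref{teeqevol}, reproducing in matrix notation the same algebraic passage that turns Theorem~\ref{teclassiceq} into Theorem~\ref{teeqevol}. I would start from the rearranged, Laplace-transformed backward renewal equation already derived inside the proof of Corollary~\ref{coroclasseq}, namely \eqref{rearr2} combined with \eqref{elg}, which reads
\begin{align}
f(\lambda, i)\, \widetilde{\pi}_{i,j}(\lambda) \, - \, \lambda^{-1} f(\lambda, i)\, \pi_{i,j}(0) \, = \, \sum_{k \in \mathcal{S}} g_{ik}\, \widetilde{\pi}_{k,j}(\lambda).
\end{align}
Contracting against the initial vector $\underline{u}$ and writing $\widetilde{q}(\lambda) := \mathpzc{R}_\lambda u$, this becomes the vector identity $f(\lambda, \cdot)\widetilde{q}(\lambda) - \lambda^{-1} f(\lambda, \cdot)\, u = G\widetilde{q}(\lambda)$, which is precisely \eqref{rearranged}.

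Next I would multiply both sides by $\lambda / f(\lambda, i)$ to obtain, componentwise,
\begin{align}
\lambda\, \widetilde{q}_i(\lambda) \, - \, u(i) \, = \, \frac{\lambda}{f(\lambda, i)} \, \l G\widetilde{q}(\lambda) \r_i .
\end{align}
The left-hand side is the Laplace transform of $\frac{d}{dt} q_i(t)$ since $q(0) = u$, and $\lambda / f(\lambda, i)$ factors as $\lambda$ (the symbol of $d/dt$) times $1/f(\lambda, i)$, which, as computed in the proof of Theorem~\ref{teeqevol}, is the Laplace transform of the potential density $s \mapsto u^f(s, i)$. By the convolution theorem the right-hand side is therefore the Laplace transform of $\frac{d}{dt}\int_0^t \l Gq(s)\r_i\, u^f(t-s, i)\, ds = \l \mathcal{D}_t^\star G q(t)\r_i$, with $\mathcal{D}_t^\star$ acting on the $i$-th coordinate through its own kernel $u^f(\cdot, i)$ exactly as in \eqref{cappotmeas}. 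Inverting the Laplace transform coordinatewise then yields the stated Cauchy problem.

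The points that need care are the same ones handled in Theorems~\ref{teclassiceq}--\ref{teeqevol}: one must know that $t \mapsto q(t)$ is $C^1$ (this is Proposition~\ref{tediff}, whose hypotheses are already included among the assumptions of Corollary~\ref{coroclasseq}), and one must know that the potential density $s \mapsto u^f(s, i)$ exists and is continuous for each $i$, so that $\mathcal{D}_t^\star$ is well defined and the Laplace inversion is legitimate; the latter holds, for instance, whenever $s \mapsto \bar{\nu}(s, i)$ is absolutely continuous and $\nu((0, \infty), i) = \infty$, since then $\sigma^{(i)}(t)$ has a density by \cite[Theorem~27.7]{satolevy}. Because the resolvent identity \eqref{rearranged} has already been established in Corollary~\ref{coroclasseq}, the only genuinely new step is the multiplication by $\lambda/f(\lambda, \cdot)$ together with the recognition of $1/f(\lambda, \cdot)$ as a potential-density transform --- a purely algebraic manoeuvre presenting no real obstacle, which is why I expect this proof, like that of Corollary~\ref{coroclasseq}, to reduce to a short transcription of the Laplace-transform computation.
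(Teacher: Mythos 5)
Your proposal is correct and follows essentially the same route as the paper: the paper likewise reduces the corollary to Theorem \ref{teeqevol} and gives a heuristic proof by multiplying the Laplace-transformed renewal identity \eqref{rearr2} by $\lambda/f(\lambda,i)$, recognizing $1/f(\lambda,i)$ as the transform of the potential density $u^f(\cdot,i)$, and inverting. Your additional remarks on the $C^1$ regularity of $t\mapsto q(t)$ and on the existence and continuity of $u^f(\cdot,i)$ only make explicit the hypotheses the paper imports from Theorem \ref{teeqevol}.
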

\begin{proof}
This is a consequence of Theorem \ref{teeqevol}. To give an heuristic proof as in Corollary \ref{coroclasseq} it is sufficient to rearrange \eqref{rearr2} as
\begin{align}
\lambda \widetilde{\pi}_{i,j}(\lambda) -  \pi_{i,j}(0) \, = \, -\frac{\lambda}{f(\lambda, i)}\l\theta_i \widetilde{\pi}_{i,j}(\lambda) + \sum_{k \in \mathcal{S}} \theta_i h_{ik} \widetilde{\pi}_{k,j}(\lambda)\r.
\end{align}
By inverting the Laplace transform and using \eqref{elg} we obtain
\begin{align}
\frac{d}{dt}\pi_{i,j}(t) \, = \, \frac{d}{dt} \int_0^t \sum_{k \in \mathcal{S}} g_{ik} \, \pi_{k,j}(s) u^f(t-s,i)ds.
\end{align}
\end{proof}

\section{Auxiliary results}
\br{We collect here two technical results used in the paper.}
\begin{lem} 
\label{tecommon}
Let $\sigma^f$ be a strictly increasing subordinator with Laplace exponent $f(\lambda)$ and let $L^f(t)$ be the hitting time of $\sigma^f(t)$. The function $t \mapsto \mathds{E}^xe^{-\br{\theta}L^f(t)}$, $\theta >0$, is completely monotone if and only if $f(\lambda)$ is a complete Bernstein function.
\end{lem}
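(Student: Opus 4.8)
The plan is to translate the statement, by means of the explicit Laplace transform of $t\mapsto\mathds{E}^{x}e^{-\theta L^{f}(t)}$, into a short chain of equivalences between complete monotonicity, the Stieltjes class and the class of complete Bernstein functions. Write $g(t):=\mathds{E}^{x}e^{-\theta L^{f}(t)}$. The first step is to recall, exactly as in \eqref{319}, that by \cite[Corollary 3.5]{meertri}
\begin{align}
\widetilde{g}(\lambda)\,=\,\int_{0}^{\infty}e^{-\lambda t}g(t)\,dt\,=\,\frac{1}{\lambda}\,\frac{f(\lambda)}{\theta+f(\lambda)},\qquad\lambda>0.
\end{align}
Since $0\le g\le1$ and $g$ is continuous, Bernstein's theorem together with Fubini's theorem and the uniqueness of the Laplace transform yield the first equivalence: $g$ is completely monotone if and only if $\widetilde{g}$ is a Stieltjes function. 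Indeed, if $g=\mathcal{L}\rho$ for a finite measure $\rho$ on $[0,\infty)$, then $\widetilde{g}(\lambda)=\rho(\{0\})\lambda^{-1}+\int_{(0,\infty)}(\lambda+s)^{-1}\rho(ds)$ is a Stieltjes function; conversely a Stieltjes representation of $\widetilde{g}$ has no linear term (because $\widetilde{g}(\lambda)\to0$ as $\lambda\to\infty$, $g$ being bounded) and hence inverts to a completely monotone $g$.

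The second step uses the standard characterization \cite{librobern} that a function $h$ is a Stieltjes function if and only if $\lambda\mapsto\lambda h(\lambda)$ is a complete Bernstein function. Applied to $h=\widetilde g$ this rephrases the previous equivalence as: $g$ is completely monotone if and only if $\psi(\lambda):=\lambda\widetilde g(\lambda)=\frac{f(\lambda)}{\theta+f(\lambda)}$ is a complete Bernstein function.

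The third and final step is the algebraic equivalence that $\psi$ is a complete Bernstein function precisely when $f$ is, for which I would use the identity $1/\psi=1+\theta/f$, i.e. $\theta/f=1/\psi-1$, together with the duality between complete Bernstein functions and Stieltjes functions recalled above (a nonzero Bernstein function is complete if and only if it is the reciprocal of a Stieltjes function, see \cite{librobern}). Recall that $f\not\equiv0$ since $\sigma^{f}$ is strictly increasing, hence $f>0$ and $0<\psi<1$ on $(0,\infty)$. If $f$ is a complete Bernstein function, then $1/f$ is a Stieltjes function, hence so is $1/\psi=1+\theta(1/f)$, and therefore $\psi$ is a complete Bernstein function. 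Conversely, if $\psi$ is a complete Bernstein function, then $1/\psi$ is a Stieltjes function whose value at $+\infty$ equals $1/\psi(\infty)\ge1$ (with $\psi(\infty)=f(\infty)/(\theta+f(\infty))\in(0,1]$, interpreted as $\psi(\infty)=1$ when $f(\infty)=\infty$), so subtracting the constant $1$ keeps us inside the Stieltjes class; thus $\theta/f=1/\psi-1$ is a Stieltjes function and $f=\theta\,(1/\psi-1)^{-1}$ is a complete Bernstein function. Chaining the three equivalences gives the claim.

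I expect the only genuinely delicate points to be bookkeeping ones: in the first step, checking that the Stieltjes representation of $\widetilde g$ carries no $b\lambda$ term and satisfies the integrability condition $\int_{(0,\infty)}(1+s)^{-1}\rho(ds)<\infty$, both of which follow from boundedness of $g$; and in the last step, verifying that the constant $1$ may legitimately be subtracted from the Stieltjes function $1/\psi$, which is precisely the estimate $\psi(\infty)\le1$.
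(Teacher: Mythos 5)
Your proof is correct, and it is a cleaner, more uniform route than the one in the paper. The paper splits into cases according to the drift and the mass of the L\'evy measure: for $b=0$, $\nu(0,\infty)=\infty$ it simply invokes \cite[Theorem 2.1]{meertoa} (complete monotonicity of $t\mapsto\mathds{E}^xe^{-\theta L^f(t)}$ is equivalent to complete monotonicity of the tail $\bar\nu$), and for the remaining cases it argues the direct implication by composing $f$ with the complete Bernstein function $\varphi(\lambda)=\lambda/(\theta+\lambda)$ (so that $\varphi\circ f$ is complete Bernstein by \cite[Theorem 7.6]{librobern}) and then inverting the Laplace transform of its tail; its converse is exactly your Stieltjes chain ($\widetilde g$ Stieltjes $\Rightarrow$ $f/(\theta+f)$ complete Bernstein $\Rightarrow$ $1/f$ plus a constant Stieltjes $\Rightarrow$ $f$ complete Bernstein). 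You instead run a single chain of equivalences, $g$ completely monotone $\Leftrightarrow$ $\widetilde g$ Stieltjes $\Leftrightarrow$ $\lambda\widetilde g=f/(\theta+f)$ complete Bernstein $\Leftrightarrow$ $f$ complete Bernstein, valid in all cases, which removes both the case distinction and the external appeal to \cite{meertoa}; the price is that you must (and do) check the two bookkeeping points, namely that the Stieltjes representation of $\widetilde g$ has vanishing constant term and that the constant in the representation of $1/\psi$ is at least $1$ so that subtracting $1$ stays in the Stieltjes class. Two small touch-ups: justify the continuity of $t\mapsto\mathds{E}^xe^{-\theta L^f(t)}$, which you use for the Laplace-inversion step, by noting that strict increase of $\sigma^f$ makes $t\mapsto L^f(t)$ a.s.\ continuous (this is exactly how the paper argues it); and the phrase ``no linear term'' should read ``no constant term'' in the Stieltjes representation, since that is the term excluded by $\widetilde g(\lambda)\to 0$ as $\lambda\to\infty$.
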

\begin{proof}
Since we assume that $\sigma^f(t)$ is strictly increasing it must be true that $b>0$ and $\nu(0, \infty) < \infty$, or $b=0$ and $\nu(0, \infty) = \infty$, or $b>0$ and $\nu(0, \infty) = \infty$.
In \cite[Theorem 2.1]{meertoa} it is proved that if $b=0$ and $\nu(0, \infty)=\infty$ the function $t \mapsto \mathds{E}^xe^{-\br{\theta}L^f(t)}$ is completely monotone if and only if the tail $t \mapsto \bar{\nu}(t)$ is a completely monotone function. This proves the result when $b=0$ and $\nu(0, \infty)=\infty$ since the complete monotonicity of $t \mapsto \bar{\nu}(t)$ implies that the L\'evy measure $\nu(\cdot)$ has a completely monotone density and therefore $\lambda \mapsto f(\lambda)$ is a complete Bernstein function (e.g. the discussion in \cite[p. 91]{pottheory}). When $b>0$ and $\nu(0, \infty) < \infty$ the proof can be very similar and we write here the basic facts. 

First we prove the direct statement. Since $t \mapsto \sigma^f (t)$ is strictly increasing then $t \mapsto L^f(t)$ are a.s. continuous functions. Hence $L^f(t)$ is a.s. continuous and also in distribution. The function $t \mapsto \mathds{E}^xe^{-\br{\theta}L^f(t)}$ is therefore continuous by \cite[Theorem 4, p. 431]{Feller} and has Laplace transform \cite[Corollary 3.5]{meertri}
\begin{align}
\int_0^\infty e^{-\lambda t} \mathds{E}^xe^{-L^f(t)}dt \, = \, \frac{f(\lambda)}{\lambda} \frac{1}{\br{\theta}+f(\lambda)}.
\label{unlapl}
\end{align}
Since
\begin{align}
\lambda \mapsto \varphi (\lambda) =  \frac{\lambda}{\br{\theta}+\lambda}
\end{align}
is a complete Bernstein function then by \cite[Theorem 7.6]{librobern}
\begin{align}
\lambda \mapsto \l \varphi \circ f \r (\lambda) \, = \,  \frac{f(\lambda)}{\br{\theta}+f(\lambda)}
\end{align}
is a complete Bernstein function. Therefore there exists a triple $\l \mathpzc{a}, \mathpzc{b}, \mathpzc{v} \r$ such that
\begin{align}
\lambda \mapsto \l \varphi \circ f \r (\lambda) \, = \, \mathpzc{a} + \mathpzc{b}\lambda + \int_0^\infty \l 1-e^{-\lambda s} \r \mathpzc{v}(ds)
\end{align}
where the L\'evy measure $\mathpzc{v}(\cdot)$ has a completely monotone density and the tail $t \mapsto \bar{\mathpzc{\nu}}(t) = \mathpzc{a}+ \mathpzc{v}(t, \infty)$ is a completely monotone function \cite[p. 91]{pottheory}. After an integration by parts we can also write
\begin{align}
\lambda \mapsto \frac{1}{\lambda} \l \varphi \circ f \r (\lambda) \, = \,   \int_0^\infty e^{-\lambda s} \l \mathpzc{b}\lambda+ \bar{\mathpzc{v}}(s) \r ds
\end{align}
and since $s \mapsto \mathpzc{b}\lambda + \bar{\mathpzc{\nu}}(s)$ is completely monotone (and obviously continuous) we can use \eqref{unlapl} and the unicity of Laplace transform to say that $t \mapsto \mathds{E}^xe^{-\br{\theta}L^f(t)}$ is completely monotone. Now we prove the converse statement and therefore assume that
\begin{align}
t \mapsto \mathds{E}^xe^{-\br{\theta}L^f(t)} \, = \, \int_0^\infty e^{-st} \mathpzc{m}(ds)
\label{convstat}
\end{align}
for some measure $\mathpzc{m}(\cdot)$. Then by using \eqref{unlapl} and \eqref{convstat} we can write
\begin{align}
\frac{f(\lambda)}{\lambda} \frac{1}{\br{\theta}+f(\lambda)} \, = \, \int_0^\infty e^{-\lambda t} \mathds{E}^xe^{-L^f(t)} \, dt \, = \, \int_0^\infty \frac{1}{\lambda + s} \mathpzc{m}(ds)
\end{align}
and thus $\lambda \mapsto \lambda^{-1}f(\lambda)\big/ \l \br{\theta}+f(\lambda) \r$ is a Stieltjes function \cite[Definition 2.1]{librobern} if 
\begin{align}
\int_0^\infty \frac{1}{1+s} \mathpzc{m}(ds) \, < \, \infty.
\label{416}
\end{align}
To verify \eqref{416}, observe that the subordinator $\sigma^f(t)$ is a.s. increasing (and non-negative) and therefore, a.s., $L^f(0) =0$. Hence $\mathds{E}^xe^{-\br{\theta}L^f(0)}=1$, a.s.. Applying this to \eqref{convstat} allows us to write $\int_0^\infty \mathpzc{m}(ds) < \infty$ and thus \eqref{416} is true.

Then $\lambda \mapsto f(\lambda) / (1+f(\lambda))$ is a complete Bernstein function by \cite[Theorem 6.2]{librobern} and therefore $\lambda \mapsto  (1/f(\lambda))+1$ is a Stieltjes function by \cite[Theorem 7.3]{librobern} (and therefore also $1/f(\lambda)$). Another application of \cite[Theorem 7.3]{librobern} ensures that $\lambda \mapsto f(\lambda)$ is a complete Bernstein function. This concludes the proof.
\end{proof}
\begin{lem}
\label{te22}
For every non-negative Stieltjes function $\mathfrak{G} (\lambda)$ with representation 
\begin{align}
\mathfrak{G} (\lambda) \, = \, c\lambda^{-1}+\int_0^\infty \frac{1}{s+\lambda} \mathfrak{K}(ds) \quad \text{with} \quad 1-c \leq \mathfrak{K}(0, \infty) < \infty
\label{222}
\end{align}
for $0 \leq c < 1$, there exists a complete Bernstein function $f$ such that
\begin{align}
\mathfrak{G} \l \lambda \r \, = \,   \frac{1}{\lambda} \frac{f(\lambda)}{\br{\theta}+f(\lambda)},
\label{215}
\end{align}
\br{for some constant $\theta >0$.}
\end{lem}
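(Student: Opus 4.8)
The strategy is to produce $f$ by explicitly inverting the relation \eqref{215}. Since \eqref{215} is equivalent to $\lambda\mathfrak{G}(\lambda)=f(\lambda)/\l\theta+f(\lambda)\r$, the natural candidate is
\begin{align}
f(\lambda) \, := \, \theta\,\frac{\lambda\mathfrak{G}(\lambda)}{1-\lambda\mathfrak{G}(\lambda)} \, = \, \frac{\theta}{1-\lambda\mathfrak{G}(\lambda)}-\theta,
\label{fdef222}
\end{align}
with $\theta>0$ arbitrary. For any such $\theta$ a one-line computation gives $\theta+f(\lambda)=\theta/\l 1-\lambda\mathfrak{G}(\lambda)\r$ and hence $f(\lambda)/\l\theta+f(\lambda)\r=\lambda\mathfrak{G}(\lambda)$, so that \eqref{215} holds automatically; the entire content of the lemma is thus that \eqref{fdef222} defines a (non-negative) complete Bernstein function. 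I would first observe that $g(\lambda):=\lambda\mathfrak{G}(\lambda)$ is itself a complete Bernstein function: substituting $\frac{\lambda}{s+\lambda}=\int_0^\infty\l 1-e^{-\lambda t}\r s\,e^{-st}\,dt$ into $g(\lambda)=c+\int_0^\infty\frac{\lambda}{s+\lambda}\mathfrak{K}(ds)$ exhibits $g$ as a Bernstein function whose L\'evy density $t\mapsto\int_0^\infty s\,e^{-st}\mathfrak{K}(ds)$ is completely monotone; alternatively this is just the statement that $g(\lambda)/\lambda=\mathfrak{G}(\lambda)$ is a Stieltjes function (e.g. \cite[Theorem 7.3]{librobern}).

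The heart of the argument is to show that $1-g(\lambda)=1-\lambda\mathfrak{G}(\lambda)$ is a non-zero, non-negative Stieltjes function. Writing $\frac{\lambda}{s+\lambda}=1-\frac{s}{s+\lambda}$ one computes
\begin{align}
1-\lambda\mathfrak{G}(\lambda) \, = \, \Bigl(1-c-\mathfrak{K}(0,\infty)\Bigr) \, + \, \int_0^\infty \frac{1}{s+\lambda}\,s\,\mathfrak{K}(ds).
\label{key222}
\end{align}
Here the measure $s\,\mathfrak{K}(ds)$ meets the Stieltjes integrability requirement because $\int_0^\infty\frac{s}{1+s}\mathfrak{K}(ds)\le\mathfrak{K}(0,\infty)<\infty$, while the constant term $1-c-\mathfrak{K}(0,\infty)$ has the correct (non-negative) sign by the hypothesis on the total mass of $\mathfrak{K}$; moreover the right-hand side of \eqref{key222} is strictly positive on $(0,\infty)$, since it is bounded below by $1-c-\mathfrak{K}(0,\infty)$ and, if this vanishes, the integral is strictly positive because $\mathfrak{K}\neq0$ (recall $c<1$). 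Thus \eqref{key222} realises $1-g$ as a non-zero non-negative Stieltjes function.

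To conclude, I would invoke the characterization of complete Bernstein functions as reciprocals of non-negative Stieltjes functions (recalled in the preliminaries; see \cite[Theorem 7.3]{librobern}): it gives that $1/\l 1-g\r$, and hence $\theta/\l 1-g\r$, is a complete Bernstein function. By \eqref{fdef222} we have $f=\theta/\l 1-g\r-\theta$; subtracting the constant $\theta$ does not alter the L\'evy measure — so the density of the L\'evy measure stays completely monotone — and the resulting function is still non-negative because its value at $0^{+}$ equals $\theta/(1-c)-\theta=\theta c/(1-c)\ge0$. Hence $f$ is a complete Bernstein function satisfying \eqref{215}, as required. I expect the only delicate point to be the identity \eqref{key222}: one must rearrange $1-\lambda\mathfrak{G}(\lambda)$ into honest Stieltjes form and verify the sign of the constant term, which is the sole place where the assumption on $\mathfrak{K}(0,\infty)$ is used and is precisely what guarantees $f\ge0$.
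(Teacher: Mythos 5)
Your proposal is correct in substance and follows essentially the same route as the paper: both arguments rest on the Stieltjes/complete Bernstein duality of \cite[Theorems 6.2 and 7.3]{librobern} together with the positivity of $1-\lambda\mathfrak{G}(\lambda)$, which the paper extracts as the claim $\mathfrak{b}\geq 1$ for the constant in the Stieltjes representation of $1/(\lambda\mathfrak{G}(\lambda))$, and which you extract as the sign of the constant term in your explicit representation $1-\lambda\mathfrak{G}(\lambda)=\bigl(1-c-\mathfrak{K}(0,\infty)\bigr)+\int_0^\infty (s+\lambda)^{-1}s\,\mathfrak{K}(ds)$. Your variant (writing $1-\lambda\mathfrak{G}$ directly in Stieltjes form, taking the reciprocal to get the complete Bernstein function $\theta/(1-\lambda\mathfrak{G})$, and then subtracting the constant $\theta$, checking non-negativity at $0^+$) is only cosmetically different from the paper's step, which shows $1/(\lambda\mathfrak{G})-1$ is a non-negative Stieltjes function and then applies \cite[Theorem 7.3]{librobern} once more; the rest of your argument (complete monotonicity of the L\'evy density of $\lambda\mathfrak{G}$, strict positivity of $1-\lambda\mathfrak{G}$, the value $f(0^+)=\theta c/(1-c)$) is correct.

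One caveat you should fix: at the decisive step you assert that $1-c-\mathfrak{K}(0,\infty)\geq 0$ ``by the hypothesis on the total mass of $\mathfrak{K}$'', but the hypothesis as written in \eqref{222} is $1-c\leq\mathfrak{K}(0,\infty)$, which gives the \emph{opposite} inequality. What your argument (and indeed the conclusion of the lemma) actually requires is $c+\mathfrak{K}(0,\infty)\leq 1$, equivalently $\lambda\mathfrak{G}(\lambda)\leq 1$: if $c+\mathfrak{K}(0,\infty)>1$ then $\lambda\mathfrak{G}(\lambda)>1$ for large $\lambda$, while $f(\lambda)/(\theta+f(\lambda))<1$ for every non-negative $f$ and $\theta>0$, so no $f$ as in \eqref{215} can exist at all. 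The same reversal is implicit in the paper's own proof, whose conclusion $\mathfrak{b}\geq 1$ from $\lim_{\lambda\to\infty}\lambda\mathfrak{G}(\lambda)=c+\mathfrak{K}(0,\infty)$ likewise needs $c+\mathfrak{K}(0,\infty)\leq 1$; in the intended application (Theorem \ref{tecm}) one has exactly $c+\mathfrak{K}(0,\infty)=1$ because $K$ is a probability measure, with $c$ the mass at zero. So your proof is sound under the intended reading of the hypothesis, but the sign should be attributed to $c+\mathfrak{K}(0,\infty)\leq 1$ (or $=1$) rather than to the inequality as literally stated.
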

\begin{proof}
By rearranging \eqref{215} we note that in order to prove \eqref{215} we can prove that
\begin{align}
f(\lambda)= \br{\theta}\frac{\lambda \mathfrak{G}(\lambda)}{1- \lambda \mathfrak{G}(\lambda)} 
\end{align}
is a complete Bernstein function for any Stieltjes function $\mathfrak{G}$ having representation \eqref{222}. \br{Since here $\theta$ is just a multiplicative constant we can prove the Theorem for $\theta =1$. Indeed if $f(\lambda)$ is a complete Bernstein function then also $\theta^{-1}f(\lambda)$ is, for any $\theta >0$.}
Since $\mathfrak{G}(\lambda)$ is a Stieltjes function we have that $\lambda \mathfrak{G}(\lambda)$ is a complete Bernstein function by \cite[Theorem 6.2]{librobern}. Then $1/( \lambda \mathfrak{G}(\lambda)) $ is Stieltjes by \cite[Theorem 7.3]{librobern} and therefore
\begin{align}
\frac{1}{\lambda \mathfrak{G}(\lambda)} \, = \, \frac{\mathfrak{a}}{\lambda} + \mathfrak{b} + \int_0^\infty \frac{1}{s+\lambda} \mathfrak{m}(ds)
\end{align}
for some constant $\mathfrak{a},\mathfrak{b}$ and a measure $\mathfrak{m}(\cdot)$. Note that
\begin{align}
\mathfrak{b} \, = \, \lim_{\lambda \to \infty} \frac{1}{\lambda \mathfrak{G}(\lambda)}
\end{align}
but 
\begin{align}
\lim_{\lambda \to \infty} \lambda \mathfrak{G}(\lambda) \, = \, c+ \lim_{\lambda \to \infty} \int_0^\infty \l 1-\frac{s}{s+\lambda}\r \mathfrak{K}(ds) \, = \, c+ \mathfrak{K}(0, \infty)
\label{218}
\end{align}
where we used the monotone convergence Theorem to move the limit inside the integral. Hence $\mathfrak{b}\geq 1$. Therefore the function
\begin{align}
\frac{1}{\lambda \mathfrak{G}(\lambda)}-1
\end{align}
is a (non-negative) Stieltjes function and thus by \cite[Theorem 7.3]{librobern} there exists a complete Bernstein function $f(\lambda)$ such that
\begin{align}
\frac{1}{f(\lambda)} \, = \, \frac{1}{\lambda \mathfrak{G}(\lambda)}-1 
\end{align}
and this proves that the function
\begin{align}
\lambda \mapsto f(\lambda) \, = \, \frac{\lambda \mathfrak{G}(\lambda)}{1-\lambda \mathfrak{G}(\lambda)}
\label{140}
\end{align}
is a complete Bernstein function.
\end{proof}

\vspace{1cm}

\begin{thebibliography}{16}
\providecommand{\natexlab}[1]{#1}
\providecommand{\url}[1]{\texttt{#1}}
\expandafter\ifx\csname urlstyle\endcsname\relax
  \providecommand{\doi}[1]{doi: #1}\else
  \providecommand{\doi}{doi: \begingroup \urlstyle{rm}\Url}\fi


  \bibitem{caffarelli} M. Allen, L. Caffarelli and A. Vasseur.  A parabolic problem with a fractional time derivative.  {\it Archive for Rational Mechanics and Analysis} 221(2): 603 -- 630, 2016.


\bibitem{abhn}
W. Arendt, C.J.K. Batty, M. Hieber and F. Neubrander.
\newblock Vector valued Laplace transform and Cauchy problem. Second Edition.
\newblock \emph{Birkh\"{a}user}, Berlin, 2010.



\bibitem{fracCauchy} B. Baeumer and M.M. Meerschaert. Stochastic solutions for fractional Cauchy problems.  {\it Fractional Calculus and Applied Analysis} {4}: 481--500, 2001.




\bibitem{baestra}
B. Baeumer and P. Straka. Fokker-Planck and Kolmogorov backward equations for continous time random walk limits.
  \emph{Proceedings of the American Mathematical Society}, to appear, 2016.



\bibitem [Bertoin(1996)]{bertoinb}
J. Bertoin.
\newblock {L\'evy processes}.
\newblock \emph{Cambridge University Press}, Cambridge, 1996.

\bibitem [Bertoin(1997)]{bertoins}
J. Bertoin.
\newblock {Subordinators: examples and appications}.
\newblock \emph{Lectures on probability theory and statistics (Saint-Flour, 1997)}, 1 -- 91. \emph{Lectures Notes in Math.}, 1717, Springer, Berlin, 1999.



\bibitem[Bogdan et al.(2009)]{pottheory}
K. Bogdan, T. Byczkowski, T.  Kulczycki, M. Ryznar, R. Song and Z. Vondra\v{c}ek.
\newblock {Potential Analysis of Stable Processes and its Extensions}.
\newblock \emph{P. Graczyk, A. Stos, editors, Lecture Notes in Mathematics 1980}, 87--176, 2009.





\bibitem{checgore} A.V. Chechkin, R. Gorenflo, and I.M. Sokolov. Fractional diffusion in inhomogeneous media  {\it J. Phys. A: Math. Gen.} 38: L679 -- L684, 2005.



\bibitem{cinlarsemi}
E. Cinlar.
\newblock {Markov additive processes and semi-regeneration}.
\newblock \emph{Discussion Paper No. 118}, Northwestern University, 1974.

\bibitem [Cinlar(1975)]{cinlar}
E. Cinlar.
\newblock {Introduction to Stochastic Processes}.
\newblock \emph{Englewood Cliffs, Prentice-Hall}, NJ, 1975.




\bibitem [Engel and Nagel(2000)]{engelnagel}
K.-J. Engel and R. Nagel.
\newblock {One-parameter semigroups for linear evolution equations}.
\newblock \emph{Springer Science \& Business Media}, 2000.



\bibitem{fedofalco}
S. Fedotov and S. Falconer.
\newblock Subdiffusive master equation with space-dependent anomalous exponent and structural instability.
\emph{Physical Review E}, 85: 031132, 2012.


\bibitem{fellersemi}
W. Feller.
\newblock On semi-Markov processes.
\emph{Proceedings of the National Academy of Sciences of the United States of America}, 51(4): 653 -- 659, 1964.


\bibitem{Feller} W. Feller. {\it An Introduction to Probability Theory and Its Applications}. Vol. II, 2nd Ed., Wiley, New York, 1971.
%


\bibitem[Garra et al.(2009)]{garra}
R. Garra, F. Polito and E. Orsingher.
\newblock{State-dependent fractional point processes.}
\newblock{\emph{J. Appl. Probab.}}, 52: 18 -- 36, 2015.


\bibitem[Georgiou et al.(2015)]{G15}
N. Georgiou, I.Z. Kiss and E. Scalas.
\newblock Solvable non-Markovian dynamic network.
\emph{Physical Review E}, 92, 042801, 2015.


\bibitem[Gihman and Skorohod(1975)]{gihman}
I.I. Gihman and A.V. Skorohod.
\newblock The theory of stochastic processes II.
\emph{Springer-Verlag}, 1975.

\bibitem{gross}
B. Gross.
\newblock On creep and relaxation.
\emph{J. Appl. Phys.}, 18: 212 -- 221, 1947.



\bibitem{grot1}
M. Grothaus, F. Jahnert, F. Riemann and J.L. da Silva.
\newblock On creep and relaxation.
\emph{ Journal of Functional Analysis,} 268: 1876 -- 1903, 2015.

\bibitem{grot2}
M. Grothaus and F. Jahnert.
\newblock Application to the fractional heat equation.
\emph{ Journal of Functional Analysis,}


\bibitem{hairer}  M. Hairer, G. Iyer, L. Koralov, A. Novikov, and Z. Pajor-Gyulai.
A fractional kinetic process describing the intermediate time behaviour of cellular flows. \emph{The Annals of Probability}, to appear (available at arXiv:1607.01859).


\bibitem[Harlamov(2008)]{harlamov}
B.P. Harlamov.
\newblock{Continuous semi-Markov processes.}
\newblock{\emph{Applied Stochastic Methods Series.}}, ISTE, London; \emph{John Wiley \& Sons, Inc.}, Hoboken, NJ, 2008.




\bibitem{kololast}
M. E. Hern\'andez-Hern\'andez, V. N. Kolokoltsov and L. Toniazzi.
\newblock{Generalised Fractional Evolution Equations of Caputo Type.}
\newblock{\emph{Chaos, Solitons \& Fractals,}} to appear.

\bibitem{jacod}
J. Jacod.
\newblock{Syst\`{e}mes r\'e\'gen\'eratifs and processus semi-Markoviens}.
\newblock{\emph{Z. Wahrscheinlichkeitstheorie verw. Gebiete}}, 31: 1 -- 23, 1974.

\bibitem{jannsen}
J. Janssen and N. Limnios.
\newblock{Semi-Markov Models and Applications}.
\newblock{\emph{Kluwer Academic Publisher}}, Dorfìdrecht/Boston/London, 1999.



\bibitem{kaspi}
H. Kaspi and B. Maisonneuve.
\newblock{Regenerative systems on the real line}.
\newblock{\emph{Ann. Probab.}}, 16: 1306 -- 1332, 1988.




\bibitem{kilbas}
A.A. Kilbas, H.M. Srivastava and J.J. Trujillo.
\newblock{Theory and Applications of Fractional Differential Equations}.
\newblock{\emph{North-Holland, Mathematics Studies}}, Amsterdam, 2006.



\bibitem[Kochubei(2011)]{kochu}
A.N. Kochubei.
\newblock{General fractional calculus, evolution equations and renewal processes}.
\newblock{\emph{Integral Equations and Operator Theory}}, 71: 583 -- 600, 2011.


\bibitem[Kolokoltsov(2009)]{KoloCTRW}
V.N. Kolokoltsov. Generalized Continuous-Time Random Walks, subordination by hitting times, and fractional dynamics. \emph{Theory Probab.
  Appl.} {53}:, 594--609, 2009.

\bibitem[Kolokoltsov(2011)]{kolokoltsov}
V.N. Kolokoltsov.
\newblock{Markov processes, semigroups and generators}.
\newblock{\emph{de Gruyter Studies in Mathematics, 38. Walter de Gruyter \& Co.}}, Berlin, 2011.


\bibitem[Korolyuk and Swishchuk(1995)]{koro}
V. Korolyuk and A. Swishchuk.
\newblock{Semi-Markov random evolutions}.
\newblock{\emph{Springer-Science + Business Media, B.V.}}, 1995.

\bibitem[Kurtz(1971)]{kurtz}
T.G. Kurtz. Comparison of semi-Markov and Markov processes. \emph{The Annals of Mathematical Statistics}, 42(3): 991 -- 1002, 1971.

\bibitem{lamperti}
J. Lamperti. An occupation time theorem for a class of stochastic processes.
\newblock {\em Trans. Am. Math. Soc.}, 88: 380 -- 387, 1958.

\bibitem{levysemi}
P. L\'evy. Processus semi-Markoviens. \emph{Proc. Int. Congr. Math.}, 3: 416 -- 426, 1956.




\bibitem[Magdziarz and Schilling(2013)]{magda}
M. Magdziarz and R.L. Schilling.
\newblock{Asymptotic properties of Brownian motion delayed by inverse subordinators.}
\newblock{\emph{Proc. Amer. Math. Soc}}, 143: 4485 -- 4501, 2015. 

\bibitem{mainardimittag} F. Mainardi. On some properties of the Mittag-Leffler function $E_\alpha(-t^\alpha)$, completely monotone for $t>0$ with $0 < \alpha <1$. {\it Discrete and Continuous Dynamical System Series B}: 9(7): 2267 --  2278, 2014.





\bibitem [Meerschaert and Scheffler(2008)]{meertri}
M.M. Meerschaert and H.P. Scheffler.
\newblock {Triangular array limits for continuous time random walks}.
\newblock \emph{Stochastic Processes and their Applications}, 118(9): 1606 -- 1633, 2008.


\bibitem [Meerschaert et al.(2011)] {meerpoisson}
M.M. Meerschaert, E. Nane and P. Vellaisamy.
\newblock {The fractional Poisson process
and the inverse stable subordinator}.
\newblock \emph{Electronic Journal of Probability}, 16(59): 1600--1620, 2011.

\bibitem [Meerschaert et al.(2011)] {meerbounded}
M.M. Meerschaert, E. Nane and P. Vellaisamy.
\newblock {Fractional Cauchy problems on bounded domains}.
\newblock \emph{The Annals of Probability}, 37(3): 979 -- 1007, 2009.

\bibitem{meerbook}
M.M. Meerschaert and A. Sikorskii..
\newblock {Stochastic Models for Fractional Calculus}.
\newblock \emph{De Gruyter Studies in Mathematics 43}, 2012.



\bibitem[Meerschaert and Straka(2014)]{meerstra}
M.M. Meerschaert and P. Straka.
\newblock {Semi-Markov approach to continuous time random walk limit processes}.
\newblock \emph{The Annals of Probability}, 42(4) : 1699 -- 1723, 2014.






\bibitem [Meerschaert and Toaldo(2015)] {meertoa}
M.M. Meerschaert and B. Toaldo.
\newblock {Relaxation patterns and semi-Markov dynamics}.
\newblock \emph{Submitted}, 2015. (available at arXiv:1506.02951)


\bibitem[ Metzler (2004)]{metzler}  R. Metzler and J. Klafter. The random walk's guide to anomalous diffusion: a fractional dynamics approach. \emph {Physics Reports}, 339: 1 -- 77, 2000.





\bibitem [Orsigher et al.(2016)]{orsrictoapota}
E. Orsingher, C. Ricciuti and B. Toaldo.
\newblock {Time-inhomogeneous jump processes and variable order operators}.
\newblock \emph{Potential Analysis}, 45(3): 435 -- 461, 2016.


\bibitem{pazy}
A. Pazy.
\newblock {Semigroups of linear operators and applications to partial differential equations}.
\newblock \emph{Springer-Verlag}, New York, 1983.



\bibitem{pykefinite}
R. Pyke.
\newblock {Markov renewal processes with finitely many states}.
\newblock \emph{Ann. Math. Statist.}, 32:  1243 -- 1259, 1961.


\bibitem{pykeinfinite}
R. Pyke.
\newblock {Markov renewal processes with infinitely many states}.
\newblock \emph{Ann. Math. Statist.}, 35:  1746 -- 1764, 1964.

\bibitem [Raberto et al.(2011)]{raberto}
M. Raberto, F. Rapallo and E. Scalas.
\newblock {Semi-Markov Graph Dynamics}.
\newblock \emph{Plos One}, 6(8): e23370, 2011.




\bibitem [Sato(1999)] {satolevy}
K. Sato.
\newblock {L\'evy processes and infinitely divisible distributions}.
\newblock \emph{Cambridge University Press}, 1999.



\bibitem [Schilling et al.(2010)]{librobern}
R.L. Schilling, R. Song and Z. Vondra\v{c}ek.
\newblock {Bernstein functions: theory and applications}.
\newblock \emph{Walter de Gruyter GmbH \& Company KG}, Vol 37 of De Gruyter Studies in Mathematics Series, 2010.







\bibitem{smith}
W.L. Smith.
\newblock{Regenerative stochastic processes.}
\newblock \emph{Proc. Roy. Soc. London. Ser. A.}, 232: 6 -- 31, 1955.



\bibitem[Toaldo(2014)]{toaldopota}
B. Toaldo.
\newblock {Convolution-type derivatives, hitting-times of subordinators and time-changed $C_0$-semigroups.}
\newblock \emph{Potential Analysis}, 42(1): 115--140, 2015.

\bibitem[Toaldo(2014)]{toaldodo}
B. Toaldo.
\newblock {L\'evy mixing related to distributed order calculus, subordinators and slow diffusions.}
\newblock \emph{Journal of Mathematical Analysis and Applications}, 430(2): 1009 -- 1036, 2015.


%
\end{thebibliography}
\end{document}